\documentclass[10pt]{elsarticle}
\journal{***}
\usepackage{amssymb,amsmath,amsthm,bm}
\usepackage[left=3cm, right=3cm, top=3.5cm, bottom=3.5cm]{geometry} 
\usepackage{amsfonts}
\usepackage{epsfig}
\usepackage{amsbsy}
\usepackage{color}
\biboptions{sort&compress}
\usepackage{subfigure}
\usepackage{graphicx}
\graphicspath{{figures/}}

\newlength\imagewidth
 \newtheorem{lemma}{Lemma}[section]
 \newtheorem{theorem}{Theorem}[section]
 
 \newtheorem{definition}{Definition}[section]
\newtheorem{remark}{Remark}[section]

\usepackage{color}
\definecolor{dgreen}{rgb}{0,.6,0}
\usepackage[normalem]{ulem}
\begin{document}

\begin{frontmatter}
\title{Spectral analysis  of small amplitude periodic  $b$-Novikov equation under transverse perturbations}

\author{Lin Lu }

\author{Xiaokai He}
\cortext[mycorrespondingauthor]{Corresponding author.}

\author{Aiyong Chen \corref{mycorrespondingauthor}}
\ead{aiyongchen@163.com}

\address{Hunan Provincial University Key Laboratory for Big Data Analysis and Application,
 \\ Hunan First Normal University,  Hunan 410205, China}

\begin{abstract}

This paper is devoted to the transverse stability problem for small-amplitude periodic traveling waves of the 
$b$-Novikov equation, which arises as a two-dimensional extension of the 
$b$-family hierarchy. The perturbations under consideration fall into two groups. One group matches the fundamental period of
 the underlying wave along the longitudinal direction, 
 while the other consists of profiles that are bounded or localized in the transverse direction.
 We perform spectral analysis of the associated linearized operator and derive precise stability and instability
  thresholds. The resulting conditions exhibit  intricate dependence on two key parameters including 
   the cubic coupling strength $b$  and the wavenumber $k$.

\end{abstract}

\begin{keyword}
$b$-Novikov equation; Transverse perturbations; Periodic traveling waves
\end{keyword}

\end{frontmatter}

\section{Introduction}
\numberwithin{equation}{section}

Nonlinear dispersive partial differential equations (PDEs) serve as fundamental mathematical models for
 describing complex wave propagation phenomena in fluid mechanics, plasma physics, nonlinear optics and marine engineering.
 Spectral stability of periodic traveling waves governs the long-time evolution of wave profiles,
  which is of great significance in mathematical physics. For multi-dimensional systems, transverse perturbations deserve particular attention,
  since one-dimensionally stable periodic waves may deform, break or collapse under two-dimensional transverse disturbances,
   an effect invisible to purely one-dimensional stability analysis.

The Kadomtsev–Petviashvili (KP) family, serving as the canonical two-dimensional extension of classical dispersive PDEs,
 has received extensive attention in transverse spectral stability studies.
 Alexander et al. \cite{Alexander1997} proved that KP solitary waves suffer long-wavelength transverse instability in the positive dispersion regime with a finite unstable short-wavelength cutoff, while remaining stable for negative dispersion.
  Kataoka et al. \cite{Kataoka2000} later showed generalized KP solitary waves can develop transverse instability in 
  negative dispersion under appropriate nonlinearity.
Within the generalized KP framework, Johnson and Zumbrun \cite{Johnson2010SIAM} analyzed transverse spectral instability of 
periodic gKdV waves and introduced a geometric orientation index to detect unstable modes, a technique subsequently
 extended to Zakharov–Kuznetsov and Benjamin–Bona–Mahony equations \cite {Johnson2010Stud}.
 Mizumachi and Tzvetkov \cite{Mizumachi2012} established rigorous nonlinear stability for KP-II line solitons under periodic transverse forcing. 
 For fifth-order water-wave KP models, Haragus and Wahlen \cite{Haragu2017} linked transverse instability of periodic and line solitary waves to their unstable periodic far-field tails. 
 Chen and Jin \cite{ChenRM2021} extended this approach to two-dimensional CH-KP-I line solitons,
  and derived linear instability conditions under periodic transverse perturbations.
The development of transverse spectral analysis has yielded refined stability criteria for various KP equations and perturbation types.
 Haragus \cite{Haragu2011} compared the transverse stability of small-amplitude periodic waves in the KP‑I and KP‑II settings.
Chen et al. \cite{Chen2024MM} quantified the combined influence of dispersion parameters and wave numbers on the stability of perturbed KP periodic waves.
Bhavna et al. \cite{Bhavna2022} provided stability theorems for KP-fKdV, KP-ILW and KP-Whitham periodic waves under two-dimensional disturbances. 
Beyond KP systems, transverse instability theory has been applied to NLS rogue waves \cite{Ablowitz2021}, two-dimensional NLS deep-water solitary waves \cite{Deconinck2006}, finite and infinite-depth Stokes water waves \cite{Creedon2026}, and laser-driven plasma waves \cite{Wan2020PRL}.

The Camassa–Holm (CH) integrable equations have attracted recent interest due to their distinctive wave-breaking, peakon and Hamiltonian structures absent from standard KdV models.
 Constantin et al. \cite{Constantin2001PRS,Constantin2006IP,Constantin2009ARMA,Constantin1998AM,Constantin1998ASNSP} established foundational results 
 on CH well-posedness, wave breaking, global existence and scattering for shallow-water solutions. 
 Geyer et al. \cite{Geyer2024} examined smooth solitary waves  within two-dimensional generalized CH systems, proving linear transverse stability for small-amplitude pulses and identifying eigenvalue resonance splitting consistent with KP-II stability theory. Nevertheless, existing literature on CH equations largely addresses one-dimensional spectral analysis or transverse stability of solitary waves, while systematic investigations into periodic traveling waves for cubic nonlinear generalized CH models remain limited.

Related stability analyses for other generalized CH-type models have also been carried out. For the generalized Fornberg–Whitham equation,
 the modulational instability of small-amplitude periodic traveling waves was investigated via spectral perturbation theory, 
 yielding an instability index that depends on the nonlinear parameter and the underlying wavenumber \cite{Zhao2026MM}.
  For the Dullin–Gottwald–Holm (DGH) equation, He et al.  \cite{He2025JMP} established the existence of smooth periodic traveling solutions, 
  proved monotonicity of the period function, and derived a spectral stability criterion within a functional-analytic framework, 
  further demonstrating orbital stability under suitable conditions. 
  These studies highlight the growing interest in stability problems for generalized CH-type dispersive PDEs 
  and motivate the present investigation.

This work focuses on the one-parameter cubic integrable $b$-Novikov equation \cite{b-novikov2013}
\begin{equation}\label{y1N}
u_t-u_{xxt} -buu_xu_{xx} + (b+1)u^2u_x - u^2u_{xxx}=0,\quad t>0, ~x\in\mathbb{R},
\end{equation}
where the real coefficient $b$ tunes cubic nonlinear coupling, recovering the classical Novikov equation at $b=3$ \cite{novikov2009}.
Existing studies on the Novikov and generalized $b$-Novikov equations involve well-posedness, blow-up, peakon solitons, symmetries and one-dimensional spectral theory. Tiglay \cite{Tiglay2011} proved local well-posedness and global existence of analytic periodic solutions. 
 Yan et al. \cite{yan etal2013} extended the well-posedness results to the Cauchy problem on the entire real line. 
Grayshan \cite{Grayshan2013} showed that peakon solution maps lack global uniform continuity in low-order Sobolev spaces.
 Wu and Yin \cite{wu2011} constructed global weak solutions  to the equation,
  and Jiang and Ni \cite{Jiang2012} characterized finite-time blow-up driven by nonlinear wave breaking. 
  Palacios \cite{Palacios2020} derived orbital asymptotic stability for Novikov peakons under momentum
   perturbations from nonnegative Radon measures.
For the generalized $b$-Novikov model, Grayshan and Himonas \cite{gkbch2013} classified power-law CH-type equations and 
derived explicit peakon traveling waves with square-root scaling. Himonas and Holliman \cite{Himonas AA2022} established 
ill-posedness and nonuniqueness for $H^s$ initial data with $s<3/2$ and $b>2$ via colliding bimodal solutions. 
 Silva and Freire \cite{da Silva PL2015} completed Lie symmetry classification, closed-form exact solutions
 and conservation laws for modified Novikov systems.  
 Efstathiou and Petropoulou \cite{Efstathiou2022} constructed high-precision approximations of peakon dynamics
  using the homotopy analysis method,  
 and Hone et al. \cite{Hone etal2009} derived exact multi-peakon formulas for the Novikov equation.
More recent spectral investigations include critical Besov space well-posedness by Zhou and Chen \cite{Zhou S2013}, 
low-regularity ill-posedness by Himonas and Holliman \cite{Himonas2012},
 linear peakon instability proven by Deng and Lafortune \cite{dengxijun2025}, 
 and one-dimensional modulational stability criteria for small-amplitude Novikov periodic waves by Ehrman et al. \cite{Ehrman2025}. 
 Fan et al. \cite{fanlili2025} additionally analyzed modulational instability for quadratic $b$-family periodic waves.

Existing studies on the $b$-Novikov equation have largely been confined to one-dimensional settings,
 including well-posedness, peakon solutions, symmetry properties, and spectral stability for longitudinal disturbances.
  By contrast, the corresponding transverse stability problem for periodic wave trains in the 
  full two-dimensional sense has not yet been systematically investigated.
 As a cubic integrable bridge between the classical Novikov equation and generalized CH systems,
  the $b$-Novikov equation possesses unique nonlinear couplings whose transverse response cannot
   be extrapolated from established KP or CH theories. The joint influence of parameter $b$,
    transverse dispersion and longitudinal wavenumber on spectral stability remains unquantified,
     and a dedicated spectral framework for transverse perturbation problems is absent.  
To evaluate transverse spectral instability of its periodic wave trains, we construct a $(2+1)$-dimensional KP-type generalization
\begin{equation}\label{y1.3}
\Big[ u_t-u_{xxt} - buu_xu_{xx} + (b+1)u^2u_x - u^2u_{xxx} \Big]_x - u_{yy} = 0,
\end{equation}
with $u=u(x,y,t)$ depending on time $t$, longitudinal coordinate $x$ and transverse coordinate $y$. 
Following standard KP construction, we differentiate the full one-dimensional $b$-Novikov operator 
in $x$ and add the transverse dispersive term $- u_{yy}$ to model lateral wave motion. 
 Unlike the purely longitudinal one-dimensional model, 
this two-dimensional generalization enables rigorous assessment of how cubic nonlinearity and transverse
 dispersion jointly determine the stability thresholds for small-amplitude periodic traveling waves.

  Motivated by the aforementioned research, this paper conducts a systematic spectral analysis for small-amplitude periodic traveling waves of the 
  $b$-Novikov equation under two-dimensional transverse perturbations.
   Two categories of perturbations are considered: co-periodic disturbances sharing the same period as the longitudinal wave, and non-periodic transverse modes that are bounded and localized.
    Using spectral perturbation theory and the periodic Evans function method, standard tools for transverse stability analysis,
     we characterize the spectral distribution of the linearized operator and derive explicit stability and instability criteria governed by the parameter $b$, the transverse dispersion strength and the longitudinal wavenumber.
      The results enrich the transverse stability studies for cubic nonlinear $b$-Novikov periodic waves,
       extend the analytical tools for multi-dimensional perturbation analysis of generalized CH-type dispersive PDEs, 
       and further advance the theoretical framework for transverse instability of periodic traveling waves.

To analyze transverse instability of small amplitude periodic traveling waves of equation (\ref{y1.3}), we introduce some
 conventions and notations in the following. The space 
\(L^2(\mathbb{R})\) consists of all real or complex-valued, and Lebesgue-measurable function \(f \) on \(\mathbb{R}\) satisfying
\[\|f\|_{L^2(\mathbb{R})}=\left(\int_{\mathbb{R}}|f(x)|^2\mathrm{d}x\right)^{1/2}<+\infty.\]
Similarly, \(L^2(\mathbb{T})\) denotes the space of \(2\pi\)-periodic, real or complex-valued, measurable function  \(f \) on \(\mathbb{R}\) satisfying
\[\|f\|_{L^{2}(\mathbb{T})}=\left(\frac{1}{2\pi}\int_{0}^{2\pi}|f(x)|^{2}\mathrm{d}x\right)^{1/2}<+\infty.\]
The space $C_{\mathrm{bdd}}(\mathbb{R})$ denotes the set of all bounded continuous functions on $\mathbb{R}$, equipped with the uniform norm
\[
\|f\|_{C_{\mathrm{bdd}}} = \sup_{x\in\mathbb{R}} |f(x)|.
\]
For any real number $s\in\mathbb{R}$, the Sobolev space $H^s(\mathbb{R})$ is defined as the collection of tempered distributions $f$ satisfying
\[
\|f\|_{H^s(\mathbb{R})} = \left( \int_{\mathbb{R}} \big(1+|t|^2\big)^s \big|\widehat{f}(t)\big|^2 \mathrm{d}t \right)^{\frac12} < +\infty,
\]
where $\widehat{f}$ stands for the Fourier transform of $f$.
We further define the periodic Sobolev space on the torus $\mathbb{T}$ by
\[
H^s(\mathbb{T}) = \big\{ f\in H^s_{\mathrm{loc}}(\mathbb{R}) \,\big|\, f \text{ is } 2\pi\text{-periodic} \big\}.
\]
For $f \in L^2(\mathbb{T})$, the Fourier series of $f$ is defined as
\[
f(x) = \sum_{n\in\mathbb{Z}} \widehat{f}_{n} \mathrm{e}^{-\mathrm{i}n x},
\]
where $\widehat{f}_{n}=\frac{1}{2\pi}\int_{0}^{2\pi}f(x)\mathrm{e}^{\mathrm{i}n x}\mathrm{d} x.$
For \(f \in L^2(\mathbb{T})\), its Fourier series converges  to  \(f \) pointwise almost everywhere. The \(L^2(\mathbb{T})\)-inner product is given by
\[\langle f,g\rangle=\frac{1}{2\pi}\int_{0}^{2\pi}f(x)\bar{g}(x)\mathrm{d}x=\sum_{n\in\mathbb{Z}}\widehat{f}_{n}\overline{\widehat{g}}_{n}.\]

The paper is organized as follows. In Section 2, we show the existence of  small-amplitude periodic traveling wave solutions
 of the $b$-Novikov equation. 
 In Section 3, we perform the spectral analysis.
  In Section 4, we consider the periodic perturbation case.
   In Section 5, we study the non-periodic perturbation case, and
   we derive explicit instability criteria and discuss their dependence on the parameters.

\section{Existence of small-amplitude periodic traveling wave solutions}
Taking traveling wave transformation \( u(x,y,t) = \varphi(x-ct) \), where \( c > 0 \) is the wave speed, then equation (\ref{y1.3}) becomes
\begin{equation}\label{2.1}
[-c\varphi' + c\varphi''' - b\varphi\varphi'\varphi'' +(b+1)\varphi^2\varphi' -\varphi^2\varphi''' ]^\prime =0,
\end{equation}
which is equivalent to
\begin{equation}\label{2.1-1z}
[(\varphi^2-c)(\varphi-\varphi^{\prime\prime})^{\prime}+b\varphi\varphi^{\prime}(\varphi-\varphi^{\prime\prime})]^\prime=0.
\end{equation}
Integrating the above equation once, and setting integration constant be zero, 
\begin{equation}\label{2.2}
(\varphi^2-c)(\varphi-\varphi^{\prime\prime})^{\prime}+b\varphi\varphi^{\prime}(\varphi-\varphi^{\prime\prime})=0.
\end{equation}
In our previous work \cite{LUPHD2026}, we have carried out a complete study of the potential function, phase portraits
and existence of periodic solutions for equation (\ref{2.2}). 
That work also clarifies that we impose the constraint \(b>0\) to investigate the  stability of periodic traveling waves. 
We therefore omit these discussions here, 
and all subsequent analyses in the present paper are performed under the assumption \(b>0\).

Let \( \varphi \) be a \( 2\pi/k \)-periodic function of its variable, for some wave number \( k > 0 \) . 
We introduce the scaling \( z = kx \) so that \( w(z) := \varphi(x) \) becomes a \( 2\pi \)-periodic function satisfying
\begin{equation}\label{y2.2}
-cw'+ck^2w'''=bk^2ww'w''-(b+1)w^2w'+k^2w^2w'''
\end{equation}
or, equivalently,
\begin{equation}\label{y-2.3}
(w-k^2w^{\prime\prime})(c-w^2)^{b/2}=d.
\end{equation}
Define  \( F: H^2(\mathbb{T}) \times \mathbb{R}_+ \times \mathbb{R}_+ \times \mathbb{R}_+ \to L^2(\mathbb{T}) \) by
\begin{equation*}
F(w;k,d,c)=(w-k^2w^{\prime\prime})(c-w^2)^{b/2}-d,
\end{equation*}
then solutions to equation (\ref{y2.2}) correspond to solutions \( w\) of
\begin{equation}\label{ODE2.4}
F(w;k,d,c)=0.
\end{equation}
Note that equation (\ref{ODE2.4}) remains invariant under the transformations \( z \to -z \) , \( z \to z+z_0 \) for all \( z_0\in\mathbb{R} \). Therefore, we assume that \( w \) is an even function with respect to \( z \). By the implicit function theorem, for some \( c > 0 \), as long as the operator
\begin{equation*}
\partial_w F(w_0; k, d, c) =1-\frac{bw_0^2}{c-w_0^2} -k^2 \partial_z^2
\end{equation*}
fails to be an isomorphism from \( H^2(\mathbb{T}) \) to \( L^2(\mathbb{T}) \), then non-constant solutions of (\ref{ODE2.4})
 may bifurcate from \( w = w_0 \).   We note that
\[
\partial_w F(w_0; k, d, c) \cos(nz) =\bigg(1-\frac{bw_0^2}{c-w_0^2} +k^2 n^2\bigg) \cos nz.
\]
Thus, when
\begin{equation}\label{y-2.6}
c=c_0=\bigg(\frac{k^2+b+1}{k^2+1}\bigg)w_0^2,
\end{equation}
we have \( \cos(z) \in \ker\left(\partial_w F(w_0; k, d, c)\right) \).
Moreover, since the function
\[
n \mapsto 1-\frac{bw_0^2}{c_0-w_0^2} +k^2 n^2 \in \mathbb{R} \quad \text{for all } n \in \mathbb{N}
\]
is strictly increasing in \( n \), it follows that
\[ \ker\left(\partial_w F(w_0; k, d, c_0)\right) = \text{span}\{\cos z\}. \]
Also, the equilibrium solution $w_0$ satisfies
\begin{equation}\label{2-w0}
\begin{pmatrix}
c-w_0^2
\end{pmatrix}^{b/2}w_0=d,
\end{equation}
substituting $c = c_0$ yields a closed-form expression
\begin{equation}\label{y2.7}
w_0=d^{\frac{1}{b+1}}\bigg(\frac{b}{1+k^2}\bigg)^{-\frac{b}{2(b+1)}}
\end{equation}
and
\begin{equation}\label{y2.8}
c_0=d^{\frac{2}{b+1}}\bigg(\frac{b}{1+k^2}\bigg)^{-\frac{b}{b+1}}\bigg(\frac{k^2+b+1}{k^2+1}\bigg).
\end{equation}
 Using the Lyapunov-Schmidt method, we construct a one-parameter family of non-constant, even and smooth solutions 
 to (\ref{ODE2.4}) near \( w = w_0(k,d) \) and \( c = c_0(k,d)\). Their small-amplitude expansions are given below.

\begin{lemma}\label{lem2.1wc}
For each \( k > 0 \), \( d> 0 \), there exists a family of small-amplitude \( 2\pi/k \)-periodic traveling wave solutions to (\ref{y1.3}) of the form
\[
w(a,d,k) := u\bigl(k(x - c(a,d,k)t)\bigr) 
\]
for $|a|\ll1,$ where \( w \) and \( c \) depend analytically on $k$ and \( a \). The function \( w \) is a smooth, \( 2\pi \)-periodic and even function with respect to \( z \), and \( c \) is an even function with respect to \( a \). Moreover, as \( a \to 0 \), the following asymptotic expansion holds
\begin{equation}\label{y2.9}
w(z;a,d,k)=w_0(k,d)+a\cos z+a^2\big(e_1+e_2\cos2z \big)+  a^3 e_3 \cos3z + O\left(a (a^3+d) \right),
\end{equation}
\begin{equation}\label{y2.10}
c(a,d,k)=c_0(k,d)+a^2c_2+O\left( a (a^3+d) \right),
\end{equation}
with
\begin{equation}\label{y2.11}
e_{1}=\frac{\big[-2(1+k^{2})^{2}+(-4-6k^{2}+k^{4})b+(-2-2k^{2}+k^{4})b^{2}\big]\big(\frac{b}{1+k^{2}}\big)^{\frac{-(2+b)}{2(1+b)}}}{24(1+b)d^\frac{1}{1+b}k^{2}},
\end{equation}
\begin{equation}\label{zje2}
e_{2}=\frac{\big[(2+2b)+(4+3b)k^{2}+(2+b)k^{4}\big]\big(\frac{b}{1+k^{2}}\big)^{\frac{b}{2(1+b)}}}{12bd^{\frac{1}{1+b}}k^{2}},
\end{equation}
\begin{equation}\label{zje3}
e_{3}=  \frac{ \left(\frac{b}{k^2+1}\right)^{-\frac{b+2}{b+1}} \left((b+2) (b+3) k^4+4 (b+1) (b+2) k^2+2 (b+1)^2\right) }{96 k^4  d^{\frac{2}{b+1}}  },
\end{equation}
\begin{equation}\label{y2.12}
c_{2}=\frac{(10+8k^{2}-2k^{4})+(20+12k^{2}+k^{4})b+(10+4k^{2}+k^{4})b^{2}}{12b(1+b)},
\end{equation}
and \( w_0 \) and \( c_0 \) are given by (\ref{y2.7}) and (\ref{y2.8}) respectively. 
\end{lemma}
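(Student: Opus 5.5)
Since the linearization at the constant state has a one-dimensional kernel, the plan is a Lyapunov--Schmidt reduction for $F(w;k,d,c)=0$ near $(w_0,c_0)$, followed by matching powers of $a$ to get the coefficients.

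\textbf{Setting and splitting.} Restrict $F$ to the even subspaces $H^2_{\mathrm{even}}(\mathbb{T})\to L^2_{\mathrm{even}}(\mathbb{T})$. This is legitimate because $F$ commutes with $z\mapsto -z$, and on these subspaces the translation invariance is removed. The computation before the lemma shows that $L:=\partial_wF(w_0;k,d,c_0)$ is a self-adjoint Fourier multiplier, up to the positive factor $(c_0-w_0^2)^{b/2}$. Its symbol is $k^2(n^2-1)$ on $\cos nz$, so
\[
\ker L=\mathrm{span}\{\cos z\},\qquad \mathrm{Ran}\,L=\{\cos z\}^{\perp}.
\]
Hence $L$ is Fredholm of index zero. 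Write $w=w_0+a\cos z+v$ with $v\perp\cos z$, and let $P$ be the $L^2$-orthogonal projection onto $\{\cos z\}^\perp$.

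\textbf{Reduction to one scalar equation.} Solve $PF(w_0+a\cos z+v;k,d,c)=0$ for $v=v(a,c)$ by the implicit function theorem. $F$ is analytic in all its arguments near $w_0$, since $c-w^2>0$ there and $(c-w^2)^{b/2}$ is therefore analytic. Moreover $PL$ is invertible on $\{\cos z\}^\perp$, and $v(0,c)$ is the constant correction that keeps $w$ an equilibrium for $c\neq c_0$. What remains is the scalar bifurcation equation
\[
g(a,c):=\langle F(w_0+a\cos z+v(a,c)),\cos z\rangle=0 .
\]
The symmetry $z\mapsto z+\pi$ maps $a\mapsto -a$, so $g$ is odd in $a$ and we may write $g=a\,h(a,c)$. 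We need $\partial_c h(0,c_0)\neq 0$. This follows from differentiating the multiplier $1-\frac{bw_0^2}{c-w_0^2}+k^2n^2$ at $n=1$ with respect to $c$, which gives $\frac{bw_0^2}{(c_0-w_0^2)^2}>0$; one also has to account for the shift of $w_0$ with $c$ at fixed $d$. The implicit function theorem then yields a curve $c=c(a)$ that is analytic and even in $a$. Analyticity in $k$ follows the same way, since all data depend analytically on $k>0$.

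\textbf{Expansion and coefficients.} Substitute the ansatz
\[
w=w_0+a\cos z+a^2(e_1+e_2\cos 2z)+a^3(e_{30}\cos z+e_3\cos 3z)+\dots,\qquad c=c_0+a^2c_2+\dots
\]
into $F=0$, written in the polynomial form (\ref{y2.2}) or via a Taylor expansion of $(c-w^2)^{b/2}$, and collect powers of $a$.
\begin{itemize}
\item At $O(a)$ the equation reproduces (\ref{y-2.6}).
\item At $O(a^2)$, inverting $L$ on the constant mode and on $\cos 2z$ gives $e_1$ and $e_2$. The constant mode carries symbol $1-\frac{bw_0^2}{c_0-w_0^2}=-k^2$.
\item At $O(a^3)$, the $\cos 3z$ component gives $e_3$ through division by $8k^2$. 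The solvability condition on the $\cos z$ component determines $c_2$. We fix the normalization $\langle w-w_0,\cos z\rangle=a$, so the $\cos z$ part of $v$ vanishes and $e_{30}=0$.
\end{itemize}
Throughout, $w_0$ and $c_0$ are expressed in terms of $d$ by (\ref{y2.7})--(\ref{y2.8}), which produces the powers $d^{-1/(1+b)}$ and $(b/(1+k^2))^{\cdots}$ appearing in (\ref{y2.11})--(\ref{y2.12}).

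\textbf{Main obstacle.} The functional-analytic part is standard. The real work is the third-order algebra: the $O(a^3)$ solvability condition mixes $e_1$, $e_2$ and the second and third derivatives of $(c-w^2)^{b/2}$, and simplifying the result to the stated form of $c_2$ is error-prone. I would first do this expansion with $c$ kept as a free parameter and substitute (\ref{y-2.6}) only at the end. I would then cross-check against the Novikov case $b=3$ and the known expansions in \cite{Ehrman2025}.
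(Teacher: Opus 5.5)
Your plan follows the paper's route: a Lyapunov--Schmidt reduction at $(w_0,c_0)$, then matching powers of $a$ in \eqref{y-2.3}. It is more explicit than the paper, which only sketches existence and defers the second-order algebra to earlier work. Carried out correctly, it does reproduce \eqref{y2.11}, \eqref{zje2}, \eqref{zje3} and \eqref{y2.12}. Two steps, however, do not work as written.

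\textbf{Transversality.} You need $\partial_c h(0,c_0)\neq 0$. The fixed-$w_0$ term you quote, $\frac{bw_0^2}{(c_0-w_0^2)^2}=\frac{(1+k^2)^2}{bw_0^2}$, does not settle this. You mention the equilibrium shift but do not compute it, and it dominates and reverses the sign. Let $w_*(c)$ be the constant solution, $w_*(c-w_*^2)^{b/2}=d$. Differentiating and using $c_0-(1+b)w_0^2=-\frac{bk^2}{1+k^2}w_0^2$ gives $w_*'(c_0)=\frac{1+k^2}{2k^2w_0}$. The linearization at a constant is a Fourier multiplier, so $\partial_a v(0,c)=0$. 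Hence $h(0,c)=\tfrac12(c-w_*^2)^{b/2}m(c)$ with $m(c)=1+k^2-\frac{bw_*^2}{c-w_*^2}$, and
\[
m'(c_0)=\frac{(1+k^2)^2}{bw_0^2}-\frac{(1+k^2)^2(1+b+k^2)}{bk^2w_0^2}=-\frac{(1+b)(1+k^2)^2}{bk^2w_0^2}\neq 0 .
\]
So the nondegeneracy holds, but only through this full computation. Your stated justification does not exclude a cancellation.

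\textbf{Order of the coefficient computation.} $e_1$ is not determined at $O(a^2)$, because $c-c_0=a^2c_2$ already enters at that order. The constant mode there gives only
\[
e_1=\frac{1+k^2}{4bk^2w_0}\bigl(2bc_2-2(1+b)-(2+b)k^2\bigr).
\]
Its $c_2$-part is exactly the equilibrium shift $w_*'(c_0)c_2$. Conversely, the $\cos z$ solvability condition at $O(a^3)$ involves $e_1+\tfrac12 e_2$, coming from $\cos z\,(e_1+e_2\cos 2z)$. So $(e_1,c_2)$ must be solved as a coupled $2\times 2$ linear system. Eliminating $e_1$ gives
\[
24b(1+b)\,c_2=20(1+b)^2+8(1+b)(2+b)k^2+2(2+b)(b-1)k^4,
\]
which is \eqref{y2.12}; back-substitution then gives \eqref{y2.11}. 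The coefficient $24b(1+b)$ is a positive multiple of $-m'(c_0)$, i.e.\ the same nondegeneracy as above.

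The paper's proof places $c_2$ at $O(a^2)$. You are right that $c_2$ needs the third-order condition, but then so does $e_1$. With these two repairs your argument is complete.
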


\begin{proof}
Since \(w\) and \(c\) are analytic in \(a\) and \(c\) is even, we expand
\begin{equation}\label{w2.13}
w(z;a,d,k) = w_0(k, d) + a \cos z + a^2 w_2(z) + a^3 w_3(z) + O(a^4)
\end{equation}
and
\begin{equation}\label{c2.14}
c(a,d,k) = c_0(k, d) + a^2 c_2 + O(a^4),
\end{equation}
where $w_2(z)$ and $w_3(z)$ are \(2\pi\)-periodic even functions. Substituting (\ref{w2.13}) and (\ref{c2.14}) into (\ref{y-2.3})
 and comparing powers of \(a\) yields linear equations at each order.

At \(O(a^2)\), solving for \(w_2=e_1+e_2\cos2z\) gives the coefficients \(e_1,e_2,c_2\) in  \eqref{y2.11}, \eqref{zje2}, \eqref{y2.12}. 
These calculations are detailed in \cite{LUPHD2026} and are omitted here for brevity.

At \(O(a^3)\), after substituting the known \(w_2\) and simplifying, we obtain
\begin{equation}
\begin{aligned}
&-\frac{b \left(k^2+1\right) \left(\frac{b}{k^2+1}\right)^{-\frac{1}{b+1}} \left((b+2) (b+3) k^4
+4 (b+1) (b+2) k^2+2 (b+1)^2\right) \cos 3 z}{4 k^2}\\
&= 3b^2 d^{\frac{2}{1+b}} k^2 (w_3 + w_3''). 
\end{aligned}
\end{equation}
Solving the above equation  with \(w_3=e_3\cos3z\) yields
\[
e_3 = \frac{ (\frac{b}{k^2+1})^{-\frac{b+2}{b+1}} \big((b+2)(b+3)k^4+4(b+1)(b+2)k^2+2(b+1)^2\big) }{96 k^4 d^{\frac{2}{b+1}} },
\]
which is the coefficient not present in \cite{LUPHD2026}. The proof is complete.
\end{proof}

\section{Spectral stability and spectral decomposition}

In this section, let \( w(z;a,d,k) \) with \(d,k>0\) and \(|a| \ll 1\) be a small-amplitude, $2\pi$-periodic traveling wave solution to equation (\ref{y1.3}), whose existence was guaranteed by Lemma 2.1. Linearizing  equation (\ref{y1.3}) at its one-dimensional periodic
 traveling wave solution \( w \) given in (\ref{y2.9}), and considering the perturbation term \( w(z) + \varepsilon V(z,t,y) \), 
 yields the equation 
\begin{equation}
k\Big( \left(1-k^2\partial_z^2\right) V_t - k\mathcal{L}[w]V \Big)_z - V_{yy}=0,
\end{equation}
where
\begin{equation}\label{tag3.2}
\begin{aligned}
\mathcal{L}[w] :=&c\partial_{z}-ck^{2}\partial_{z}^{3}-2(1+b)ww_{z}-(1+b)w^{2}\partial_{z} \\
 & +bk^2(w_zw_{zz}+ww_{zz}\partial_z+ww_z\partial_z^2)+k^2\left(w^2\partial_z^3+2ww_{zzz}\right).
\end{aligned}
\end{equation}
Using change of variable and with a slight abuse of notation, we perform the rescaling
 $t\rightarrow kt, y\rightarrow ky,$
we have 
\begin{equation}
\begin{aligned}
& \Big( \left(1-k^2\partial_z^2\right) V_t -  c\partial_{z}V + ck^{2}\partial_{z}^{3}V + 2(1+b)ww_{z}V + (1+b)w^{2}\partial_{z}V \\
 & -b k^2 (w_zw_{zz}V + ww_{zz}\partial_zV + ww_z\partial_z^2V) - k^2 \left(w^2\partial_z^3V + 2ww_{zzz}V \right) \Big)_z - V_{yy}=0.
\end{aligned}\end{equation}
Setting \( V(z, t, y) = e^{\lambda t + i q y } v(z) \) with \( i^2 = -1 \), we derive 
\begin{equation}
\begin{aligned}\label{xzj3.5}
& \Big( \left(1-k^2\partial_z^2\right) \lambda v -  c\partial_{z} v + ck^{2}\partial_{z}^{3} v + 2(1+b)ww_{z} v + (1+b)w^{2}\partial_{z} v \\
 & -b k^2 (w_zw_{zz}v + ww_{zz}\partial_z v + ww_z\partial_z^2 v) 
 - k^2 \left(w^2\partial_z^3 v + 2ww_{zzz} v \right) \Big)_z + q^2 v=0,
\end{aligned}\end{equation}
which  is
\begin{equation}\label{y3a1}       
 \Big( \left(1-k^2\partial_z^2\right) \lambda v - \mathcal{L}[w] v \Big)_z + q^2 v=0.
\end{equation}
The left-hand side of this equation defines the differential operator
\begin{equation}\label{y3aq2} 
\mathcal{W}_a(\lambda,q)v := 
(1 - k^2\partial_z^2)\partial_z\Bigl[ \lambda v - c v_z + (1 - k^2\partial_z^2)^{-1} \bigl( \widetilde{Q}(v) \bigr) \Bigr] + q^2 v,
\end{equation}
where
\begin{equation*}
\begin{aligned}
\widetilde{Q}(v) = & - b k^2 \big(  w_z w_{zz} v + w w_{zz} \partial_z v + w w_z \partial_z^2 v \big) \\
& + (b+1)  \big( 2w  w_z v+ w^2 \partial_z v \big)  - k^2 \big ( 2w  w_{zzz} v + w^2 \partial_z^3 v \big).
\end{aligned}
\end{equation*}

This work focuses on two-dimensional transverse perturbations corresponding to non-vanishing transverse wavenumber $q$.
 Three distinct perturbation categories are investigated throughout the analysis.
  \begin{enumerate}[(i)]
    \item \textit{z-periodic perturbations}: the operator $\mathcal{W}_a(\lambda,q)$ is defined as a mapping from $H^4(\mathbb{T})$ to $L^2(\mathbb{T})$.
    \item \textit{localized perturbations}: the operator $\mathcal{W}_a(\lambda,q)$ acts from $H^4(\mathbb{R})$ to $L^2(\mathbb{R})$.
    \item \textit{bounded perturbations}: the operator $\mathcal{W}_a(\lambda,q)$ maps $C^4_{\text{bdd}}(\mathbb{R})$ 
    into $C_{\text{bdd}}(\mathbb{R})$.
  \end{enumerate}

The rigorous definition of transverse spectral stability is formulated below.

\begin{definition}[Transverse spectral stability]
Let $u(x,y,t)=w(k(x-ct))$ be a $2\pi/k$-periodic traveling wave solution of equation \eqref{y1.3}, 
where $w$ and $c$ are as defined in \eqref{y2.9} and \eqref{y2.10}, respectively.
 %For each $\lambda\in\mathbb C$ with $\mathrm{Re}(\lambda)>0$ and any parameter $q\neq 0$, 
% define the operator $\mathcal{W}_a(\lambda,q)$. 
 The traveling wave $w$ is said to be transversely 
 spectrally stable if the operator $\mathcal{W}_a(\lambda,q)$ is invertible on the specified function spaces, 
 according to the type of perturbations considered. More precisely,
\begin{enumerate}[(i)]
\item For periodic perturbations, 
\[
\mathcal{W}_a(\lambda,q): H^4(\mathbb{T}) \to L^2(\mathbb{T})
\]
is invertible.
\item For non-periodic localized perturbations,
\[
\mathcal{W}_a(\lambda,q): H^4(\mathbb{R}) \to L^2(\mathbb{R})
\]
is invertible.
\item For non-periodic bounded perturbations,
\[
\mathcal{W}_a(\lambda,q): C_{\mathrm{bdd}}^4(\mathbb{R}) \to C_{\mathrm{bdd}}(\mathbb{R})
\]
is invertible.
\end{enumerate}
The above invertibility conditions are required to hold for all $\lambda\in\mathbb C$
 with $\mathrm{Re}(\lambda)>0$ and for every $q\neq 0$.
\end{definition}

\section{Periodic perturbations}

In this section we study the transverse spectral stability of the periodic waves $w$ with respect to two-dimensional perturbations
 that are periodic in $z$ with the same period $2\pi$.  We investigate the invertibility of $\mathcal{W}_a(\lambda,q)$ defined
  in (\ref{y3aq2}) acting in $L^2(\mathbb{T})$ with domain $H^4(\mathbb{T})$, for $\lambda\in\mathbb{C}$
   with $\operatorname{Re}( \lambda )>0$ and $q\neq0$.

\subsection{Reduction to a spectral problem for $\mathcal{M}_a(q)$}

For $q\neq0$, integrating the equation $\mathcal{W}_a(\lambda,q)v=0$ over one period yields
\[
q^2\int_0^{2\pi}v(z)\,dz =0,
\]
hence any eigenfunction must have zero mean.  We therefore restrict to the subspace
\[
L_0^2(\mathbb{T})=\Bigl\{f\in L^2(\mathbb{T}):\int_0^{2\pi}f(z)\,dz=0\Bigr\},
\]
on which $\partial_z$ is invertible,  with the property that for any $f \in L_0^2(\mathbb{T}),$
\[
\partial_z^{-1}\partial_z f = f - \langle f\rangle,
\]
where
\[
\langle f \rangle := \frac{1}{2\pi}\int_0^{2\pi} f(z)\,dz,
\]
and $\mathcal{P}_0 f := \langle f\rangle$ denotes the orthogonal projection onto constants.

Applying $\partial_z^{-1}$ to  (\ref{y3a1}), and using the fact that  $v \in L_0^2(\mathbb{T}),$ we obtain
\[
(1-k^2\partial_z^2)\lambda v - \mathcal{L}[w]v - \Big\langle (1-k^2\partial_z^2)\lambda v - \mathcal{L}[w]v \Big\rangle = -q^2\partial_z^{-1}v.
\]
Since $v\in L_0^2$, $\langle (1-k^2\partial_z^2)\lambda v\rangle=0$, hence
\[
(1-k^2\partial_z^2)\lambda v - \mathcal{L}[w]v + \langle \mathcal{L}[w]v\rangle = -q^2\partial_z^{-1}v.
\]
Thus the  reduced eigenvalue problem is
\begin{equation}\label{eigenpro}
\lambda v = (1-k^2\partial_z^2)^{-1}\bigl(\mathcal{L}[w]v - \langle \mathcal{L}[w]v\rangle\bigr) - q^2(1-k^2\partial_z^2)^{-1}\partial_z^{-1}v. 
\end{equation}
Since
\[
c\langle v'\rangle - ck^2\langle v'''\rangle = 0,
\]
by (\ref{tag3.2}), we have
\[
\begin{aligned}
\langle \mathcal{L}[w]v\rangle
&= -2(1+b)\langle w w' v\rangle - (1+b)\langle w^2 v'\rangle \\
&\quad + bk^2\langle w' w'' v\rangle + bk^2\langle w w'' v'\rangle + bk^2\langle w w' v''\rangle \\
&\quad + k^2\langle w^2 v'''\rangle + 2k^2\langle w w''' v\rangle.
\end{aligned}
\]
Integration by parts over the periodic interval $[0,2\pi]$ yields
\begin{align*}
-(1+b)\langle w^2 v'\rangle 
&= (1+b)\langle (w^2)' v\rangle = 2(1+b)\langle w w' v\rangle,
\end{align*}
which cancels $-2(1+b)\langle w w' v\rangle$.
Since
\begin{align*}
bk^2\langle w w'' v'\rangle &= -bk^2\langle (w w'')' v\rangle = -bk^2\langle w' w'' v\rangle - bk^2\langle w w''' v\rangle,\\
bk^2\langle w w' v''\rangle &= bk^2\langle (w w')'' v\rangle = bk^2\langle 3 w' w'' + w w''' v\rangle,
\end{align*}
\[
k^2\langle w^2 v'''\rangle = -k^2\langle (w^2)''' v\rangle 
= -k^2\langle 6 w' w'' + 2 w w''' v\rangle,
\]
 we obtain 
\[
\langle \mathcal{L}[w]v\rangle = 3(b-2)k^2\,\langle w' w'' v\rangle,
\]
or equivalently,
\[
\mathcal{P}_0(\mathcal{L}[w]v) = \frac{3(b-2)k^2}{2\pi}\int_0^{2\pi} w'(z) w''(z) v(z)\,dz.
\]
Substituting the above projection term into (\ref{eigenpro}), we define the 
 spectral operator acting on $L_0^2(\mathbb{T})$ with domain $H^3(\mathbb{T})\cap L_0^2(\mathbb{T})$ as
\begin{equation}\label{tag4.19q1}
\mathcal{M}_a(q) :=
(1-k^2\partial_z^2)^{-1}\Bigl(\mathcal{L}[w] - \mathcal{P}_0\mathcal{L}[w]\Bigr)
- q^2 (1-k^2\partial_z^2)^{-1}\partial_z^{-1}.
\end{equation}
Explicitly,
\begin{equation}\label{tag4.1}
\mathcal{M}_a (q) v =
(1-k^2\partial_z^2)^{-1}\mathcal{L}[w]v
- 3(b-2)k^2 (1-k^2\partial_z^2)^{-1}\langle w' w'' v\rangle
- q^2 (1-k^2\partial_z^2)^{-1}\partial_z^{-1}v.
\end{equation}

\subsection{The unperturbed operator $\mathcal{M}_0(q)$}

For $a=0$, we have $w\equiv w_0$ and $c\equiv c_0$ given by (\ref{y2.7}) and (\ref{y2.8}).
  Substituting the constant solution into (\ref{tag3.2}) and noting that all derivatives of $w$ vanish, we obtain
\[
\mathcal{L}[w_0] = \bigl(c_0-(1+b)w_0^2\bigr)\partial_z + \bigl(-c_0k^2+k^2w_0^2\bigr)\partial_z^3.
\]
Using the bifurcation condition (\ref{y2.8}),  we have
\[
c_0-(1+b)w_0^2  = -\frac{bk^2}{k^2+1}\,w_0^2
\]
and
\[
-k^2c_0+k^2w_0^2  = -\frac{bk^2}{k^2+1}\,w_0^2.
\]
Thus
\begin{equation}\label{tage4.2}
\mathcal{L}[w_0] = -\frac{bk^2}{k^2+1}\,w_0^2\bigl(\partial_z+\partial_z^3\bigr). 
\end{equation}
Inserting (\ref{tage4.2}) into (\ref{tag4.1}) gives
\begin{equation}\label{tage4.3}
\mathcal{M}_0(q) = -\frac{bk^2}{k^2+1}\,w_0^2(1-k^2\partial_z^2)^{-1}(\partial_z+\partial_z^3) - q^2(1-k^2\partial_z^2)^{-1}\partial_z^{-1} \end{equation}
and
\begin{equation}
\mathcal{M}_a(q) - \mathcal{M}_0(q) =  (1-k^2\partial_z^2)^{-1} (\mathcal{L}[w] - \mathcal{L}[w_0])
 -  3(b-2)k^2 (1-k^2\partial_z^2)^{-1}\langle w' w'' \cdot \rangle.
 \end{equation}
 A direct calculation shows that 
 \begin{equation}\label{tagOA_a}
||\mathcal{M}_a(q) - \mathcal{M}_0(q)||_{  H^1 {(\mathbb{T})} \rightarrow L^2 (\mathbb{T}) }=O (|a|), ~~~~ a \rightarrow 0.
 \end{equation}

Since
\begin{equation*}
 -\frac{bk^2}{k^2+1}\,w_0^2(1-k^2\partial_z^2)^{-1}(\partial_z+\partial_z^3) e^{inz}
= i\,\frac{bk^2}{k^2+1}\,w_0^2\,\frac{n(n^2-1)}{1+k^2n^2}e^{inz}
\end{equation*}
and
\[
-q^2(1-k^2\partial_z^2)^{-1}\partial_z^{-1}e^{inz} = i\,\frac{q^2}{n(1+k^2n^2)}e^{inz},
\]
we have
\[
\mathcal{M}_0(q)e^{inz} = i\left( \frac{bk^2}{k^2+1}\,w_0^2\,\frac{n(n^2-1)}{1+k^2n^2} + \frac{q^2}{n(1+k^2n^2)} \right) e^{inz}.
\]
Therefore the eigenvalues are purely imaginary $i\omega_n$ with
\begin{equation}\label{ta4.5}
\omega_n = \frac{n}{1+k^2n^2}\left(bw_0^2 \frac{k^2(n^2-1)}{1+k^2} + \frac{q^2}{n^2} \right),\qquad n\in\mathbb{Z}^*.
\end{equation}
Write $\mathcal{M}_0(q)=\mathcal{J}\mathcal{K}_0(q),$ where $\mathcal{J}:=\partial_z(1-k^2\partial_z^2)^{-1}$ is 
skew‑adjoint on $L_0^2(\mathbb{T})$.
Then $\mathcal{K}_0(q)=\mathcal{J}^{-1}\mathcal{M}_0(q)$.
By using (\ref{tage4.3}),
we have
\[
\begin{aligned}
\mathcal K_0(q)
&= (1 - k^2\partial_z^2)\partial_z^{-1}
\left[
-\frac{b k^2}{k^2+1} w_0^2 (1 - k^2\partial_z^2)^{-1}(\partial_z+\partial_z^3)
- q^2 (1 - k^2\partial_z^2)^{-1}\partial_z^{-1}
\right] \\
&= -\frac{b k^2}{k^2+1} w_0^2 (1+\partial_z^2)
- q^2 \partial_z^{-2}.
\end{aligned}
\]
Since \(\partial_z^2\) and  \(\partial_z^{-2}\) are self-adjoint,
 combinations with real constants and real coefficients preserve self-adjointness,
 we have
$\mathcal K_0(q)$ is a self-adjoint operator on  $L_0^2(\mathbb T).$
Since $\mathcal{M}_0(q)e^{inz} = i \omega_n e^{inz},$  from $\mathcal{J}^{-1}=(1-k^2\partial_z^2)\partial_z^{-1}$ we obtain
\[
\mathcal{K}_0(q)e^{inz}  = (1+k^2n^2)\frac{\omega_n}{n} e^{inz}.
\]
Using (\ref{ta4.5}), 
\[
(1+k^2n^2)\frac{\omega_n}{n} =\left(bw_0^2 \frac{k^2(n^2-1)}{1+k^2} + \frac{q^2}{n^2} \right).
\]
Thus, the Krein signature $K_n$ of the eigenvalue $i\omega_n$ of $\mathcal{M}_0(q)$ is defined as
\begin{equation}\label{ta4.6}
K_n := \operatorname{sgn}\bigl\langle \mathcal{K}_0(q)e^{inz}, e^{inz}\bigr\rangle = \operatorname{sgn}\left(bw_0^2 \frac{k^2(n^2-1)}{1+k^2} + \frac{q^2}{n^2} \right). 
\end{equation}

\subsubsection{Finite and short wavelength transverse perturbations}

For $b>0,$ when $q>0$ and $|n|\geq1$, we have $\left( bw_0^2 \frac{k^2(n^2-1)}{1+k^2} + \frac{q^2}{n^2} \right)>0$.
 Hence all $K_n$  are positive for $|n|\geq1$.  Standard perturbation arguments  show that for $q$ bounded away from the origin, 
 the spectrum of $\mathcal{M}_a(q)$ remains purely imaginary for sufficiently small $|a|$.  Thus, we have the following conclusion. 
 \begin{lemma}
  For any given $q^*> 0$, there exists $|a|$ sufficiently small such that for all
$| q | >  q^*$, the spectrum of $\mathcal{M}_a(q)$ is purely imaginary.
 \end{lemma}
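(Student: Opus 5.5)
Two features drive the argument: the Hamiltonian (reversible) structure of $\mathcal{M}_a(q)$, which pairs eigenvalues as $\lambda \mapsto -\lambda$ and $\lambda\mapsto\bar\lambda$, and the definite Krein signature (\ref{ta4.6}) of the unperturbed spectrum. First I will write $\mathcal{M}_a(q)=\mathcal{J}\mathcal{K}_a(q)$ with $\mathcal{J}=\partial_z(1-k^2\partial_z^2)^{-1}$ skew-adjoint on $L^2_0(\mathbb{T})$. Here $\mathcal{K}_a(q)=\mathcal{J}^{-1}\mathcal{M}_a(q)$, and it is self-adjoint because the linearization $\mathcal{L}[w]$ comes from the Hamiltonian form of the $b$-Novikov flow. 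For $b\neq 3$ the equation is not obviously Hamiltonian in this form. In that case I will instead use reversibility: $w$ is even, so $\mathcal{M}_a(q)$ anticommutes with $z\mapsto -z$ and has real coefficients. Hence the spectrum is symmetric under $\lambda\mapsto-\lambda$ and $\lambda\mapsto\bar\lambda$, and an eigenvalue can leave $i\mathbb{R}$ only through a collision of two imaginary eigenvalues. Second, I will exploit that $\mathcal{M}_a(q)$ has compact resolvent with discrete spectrum, and that by (\ref{tagOA_a}) it is a relatively bounded $O(|a|)$ perturbation of $\mathcal{M}_0(q)$, since $\mathcal{M}_0(q)$ is of order one, acting like $\partial_z$ at high frequency.

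\textbf{Step 1 (uniform separation at high modes).} From (\ref{ta4.5}), $\omega_n\sim \frac{bw_0^2}{1+k^2}\,n$ as $|n|\to\infty$, so consecutive gaps $|\omega_{n+1}-\omega_n|$ stay bounded below uniformly in $n$ and in $q$. A resolvent estimate on circles of fixed radius around each $i\omega_n$ then shows that for large $|n|$ each circle contains exactly one simple eigenvalue of $\mathcal{M}_a(q)$. By the symmetry $\lambda\mapsto-\bar\lambda$, a simple eigenvalue lies on $i\mathbb{R}$. What remains is a finite set of low modes, and possible clusters where $\omega_n=\omega_m$ for some $n\neq m$.

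\textbf{Step 2 (Krein signature for collisions).} By (\ref{ta4.6}), for $b>0$ and $|q|>q^*$ every $K_n$ is positive. Including $n=\pm1$, where $K_n=\operatorname{sgn}(q^2)>0$, $\mathcal{K}_0(q)$ is positive definite on $L^2_0$. In fact $\mathcal{K}_0(q)\ge \min\{q^{*2}, \ldots\}>0$, with a lower bound uniform in $|q|>q^*$. I will show that $\mathcal{K}_a(q)=\mathcal{K}_0(q)+O(|a|)$ in the form sense on $H^1$. The form bound will be that $\langle\mathcal{K}_0 v,v\rangle$ controls $\|v\|_{H^1}^2$ times $w_0^2bk^2/(1+k^2)$, while the perturbation is $O(|a|)\|v\|_{H^1}^2$. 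It follows that $\mathcal{K}_a(q)$ remains positive definite for $|a|$ small uniformly in $|q|>q^*$. The conclusion is then standard: if $\mathcal{J}\mathcal{K}v=\lambda v$ with $\mathcal{K}>0$, then $\lambda\langle \mathcal{K}v,v\rangle$ is purely imaginary, since $\langle \mathcal{K}\mathcal{J}\mathcal{K}v,v\rangle$ is imaginary. With $\langle\mathcal{K}v,v\rangle>0$, this forces $\lambda\in i\mathbb{R}$. This argument bypasses the collision analysis entirely and handles all $q$ at once.

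\textbf{Main obstacle.} The hardest step is justifying the symplectic factorization: $\mathcal{J}^{-1}\mathcal{M}_a(q)$ must be self-adjoint for general $b$, including the projection term $3(b-2)k^2\langle w'w''\cdot\rangle$. Since $\mathcal{J}^{-1}$ annihilates constants, that term drops out after applying $\mathcal{J}^{-1}$, which helps. What I would try first is to check directly that $\partial_z^{-1}\mathcal{L}[w]$ is symmetric on $L^2_0$. If it is not, I would conjugate by the weight $(c-w^2)^{\alpha}$ suggested by (\ref{y-2.3}) to symmetrize it. If this fails, I fall back on Step 1 plus a finite-dimensional argument for the low modes. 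That argument uses that the eigenvalues $i\omega_n$ with equal positive signature cannot split off the axis under a small structure-preserving perturbation, via the reduced $2\times2$ or $m\times m$ matrices that a Kato projection produces.
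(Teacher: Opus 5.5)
Your Step~2 is the same idea as the paper's proof. The paper only notes that every $K_n$ in (\ref{ta4.6}) is positive for $q\neq 0$ and then invokes ``standard perturbation arguments''. The point you flag as the main obstacle is exactly what that argument needs, the paper takes it for granted, and it is false.

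Since Fourier multipliers commute, $\mathcal{K}_a(q)=\mathcal{J}^{-1}\mathcal{M}_a(q)=\partial_z^{-1}(1-\mathcal{P}_0)\mathcal{L}[w]-q^2\partial_z^{-2}$. Because $c-c_0$ and $w'w''$ are $O(a^2)$, its $O(a)$ part is $a w_0\,\partial_z^{-1}L_1$ with
\[
L_1=2(1+b+k^2)\sin z-\bigl(2(1+b)+bk^2\bigr)\cos z\,\partial_z-bk^2\sin z\,\partial_z^2+2k^2\cos z\,\partial_z^3 .
\]
One computes $\langle \partial_z^{-1}L_1 e^{iz},e^{2iz}\rangle=-\tfrac12\bigl[2(1+b)+(2+b)k^2\bigr]$ but $\langle \partial_z^{-1}L_1 e^{2iz},e^{iz}\rangle=(b-7)k^2-(1+b)$. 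These agree only for $b=4$; equivalently, $L_1\partial_z$ is formally self-adjoint iff $b=4$. So for every $b\neq 4$, the Novikov case $b=3$ included, $\mathcal{K}_a(q)$ has a skew-adjoint part of size $O(|a|)$. Your identity then only gives $\lambda\,\overline{\langle \mathcal{K}_a v,v\rangle}\in i\mathbb{R}$ with $\langle\mathcal{K}_a v,v\rangle\notin\mathbb{R}$, which does not force $\operatorname{Re}\lambda=0$.

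Your fallback has the same defect. Reversibility ($\lambda\mapsto-\bar\lambda$) keeps simple eigenvalues on $i\mathbb{R}$. But the claim that colliding eigenvalues of equal positive signature cannot leave the axis presupposes a Hamiltonian structure for $\mathcal{M}_a(q)$. The signs $K_n$ are computed from $\mathcal{K}_0(q)$ alone and carry no information once that structure is lost. No symmetrizing weight $(c-w^2)^\alpha$ can repair this for all $b$, because the obstruction shows up in the eigenvalues themselves, as the next paragraph shows.

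Collisions do occur for $|q|>q^*$, so Step~1's claim that gaps are bounded below uniformly in $q$ is false. Step~1 is also not uniform in $n$: since $\mathcal{M}_a(q)-\mathcal{M}_0(q)$ is first order like $\mathcal{M}_0(q)$ itself, a fixed-radius resolvent estimate around $i\omega_n$ only bounds shifts by $O(|a||n|)$ against $O(1)$ gaps. For example, $\omega_1(q)=\omega_2(q)$ at $q_{12}^2=12bk^2w_0^2/(1+7k^2)$, where $K_1=K_2=1$. In the basis $e^{iz},e^{2iz}$ the reduced matrix there is $\operatorname{diag}(i\omega_1,i\omega_2)+a\begin{pmatrix}0&\beta_{12}\\ \beta_{21}&0\end{pmatrix}+O(a^2)$, with $\beta_{12}=\tfrac{iw_0[(b-7)k^2-(1+b)]}{1+k^2}$ and $\beta_{21}=-\tfrac{iw_0[2(1+b)+(2+b)k^2]}{1+4k^2}$. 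To leading order the eigenvalues are therefore $\tfrac{i}{2}(\omega_1+\omega_2)\pm\tfrac12\sqrt{4a^2\beta_{12}\beta_{21}-(\omega_1-\omega_2)^2}$.

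The product $\beta_{12}\beta_{21}$ has the sign of $(b-7)k^2-(1+b)$, which is the paper's $B$ in (\ref{Brep}) with $\eta$ replaced by $2$. So for $b>7$ and $k^2>\tfrac{1+b}{b-7}$, a pair of eigenvalues with real parts of order $|a|$ appears for $q^2$ in an $O(|a|)$-band around $q_{12}^2$. For those parameters and $q^*<q_{12}$ the statement itself fails, and no argument resting only on the positivity of the $K_n$ can prove it for all $b>0$. What is actually required is an explicit computation of the coupling at each collision, and that ingredient is missing from both your proposal and the paper.
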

 
 From the above discussion, we can see that the only possible instability arises from long wavelength transverse perturbations, i.e. when
 $q$ is small, the collision may occurs at the origin.
\subsubsection{Long wavelength transverse perturbations}

For $q$ and $a$ small, we regard $\mathcal{M}_a(q)$ as a perturbation of the constant‑coefficient operator $\mathcal{M}_0(0)$. 
 From (\ref{tage4.3}) with $q=0$,
\begin{equation}\label{taA_00}
\mathcal{M}_0(0) = -\frac{bk^2}{k^2+1}\,w_0^2(1-k^2\partial_z^2)^{-1}(\partial_z+\partial_z^3),
\end{equation}
its spectrum in $L_0^2(\mathbb{T})$ is
\[
\operatorname{spec}(\mathcal{M}_0(0)) = \left\{ i\, \frac{b w_0^2 k^2}{(k^2+1)} \frac{n(n^2-1)}{(1+k^2n^2)} 
 : n\in\mathbb{Z}^* \right\}.
\]
In particular, zero is a double eigenvalue (for $n=\pm1$), and all other eigenvalues are simple, purely 
imaginary and lie outside a fixed ball around the origin. 

From (\ref{tag4.1}) and  (\ref{taA_00}), we obtain
\begin{equation}
\begin{aligned}
\mathcal{M}_a(q) - \mathcal{M}_0(0) &=  (1-k^2\partial_z^2)^{-1}\left( \mathcal{L}[w] + \frac{bk^2}{k^2+1}\,w_0^2(\partial_z+\partial_z^3) \right)\\
& ~~~ -  3(b-2)k^2 (1-k^2\partial_z^2)^{-1}\langle w' w'' \cdot \rangle
 - q^2(1-k^2\partial_z^2)^{-1}\partial_z^{-1},
 \end{aligned}
\end{equation}
by  (\ref{tagOA_a}), we have
 \begin{equation}
||\mathcal{M}_a(q) - \mathcal{M}_0(0)||_{  H^1 {(\mathbb{T})} \rightarrow L^2 (\mathbb{T}) }=O (q^2 +| a|).
 \end{equation}
Thus, by a standard perturbation theory, for sufficiently small $|q|$ and $|a|$, the spectrum of $\mathcal{M}_a(q)$ 
splits into a two‑dimensional part $\operatorname{spec}_0(\mathcal{M}_a(q))$ near the origin and the
 rest $\operatorname{spec}_1(\mathcal{M}_a(q))$ which is purely imaginary. 
 Moreover, the spectral projection $\Pi_a(q)$ onto the two‑dimensional subspace $\mathcal{X}_a(q)=\Pi_a(q)L_0^2(\mathbb{T})$ satisfies $\|\Pi_a(q)-\Pi_0(0)\|=O(q^2+|a|)$.

To determine the two eigenvalues in $\operatorname{spec}_0(\mathcal{M}_a(q))$, we compute the $2\times2$ matrix representing the restriction of $\mathcal{M}_a(q)$ to $\mathcal{X}_a(q)$. 
From (\ref{taA_00}), using
$(\partial_z+\partial_z^3)\sin z =0$ and
$(\partial_z+\partial_z^3)\cos z=0,$
it follows that $\mathcal{M}_0(0)\sin z = \mathcal{M}_0(0)\cos z =0$. Hence zero is a double eigenvalue of $\mathcal{M}_0(0)$ with
 eigenspace spanned by $\{\cos z,\sin z\}$.

To find the eigenvalues of $\mathcal{M}_0(q)$ for small $q$, we compute its action on the basis $\{\cos z,\sin z\}$.
By using (\ref{tage4.3}), we obtain
\[
\begin{aligned}
\mathcal{M}_0(q)\cos z &= -\dfrac{bk^2w_0^2}{k^2+1}(1-k^2\partial_z^2)^{-1}(\partial_z+\partial_z^3)\cos z 
  - q^2(1-k^2\partial_z^2)^{-1}\partial_z^{-1}\cos z \\
&= \frac{- q^2}{1+k^2}\sin z,
\end{aligned}
\]
\[
\begin{aligned}
\mathcal{M}_0(q)\sin z &= -\dfrac{bk^2w_0^2}{k^2+1}(1-k^2\partial_z^2)^{-1}(\partial_z+\partial_z^3)\sin z 
  - q^2(1-k^2\partial_z^2)^{-1}\partial_z^{-1}\sin z \\
&= \frac{q^2}{1+k^2}\cos z.
\end{aligned}
\]
Thus, in the basis $\{\cos z,\sin z\}$, the matrix representation of $\mathcal{M}_0(q)$ is
\begin{equation}\label{M0ell}
M_0(q)=\begin{pmatrix}
0 & \dfrac{q^2}{1+k^2}\\[6pt]
\dfrac{-q^2}{1+k^2} & 0
\end{pmatrix},
\end{equation}
the eigenvalues are
 $ i\frac{q^2}{1+k^2}$ and $ - i\frac{q^2}{1+k^2}$.
Therefore, for $a=0$,
\[
 \operatorname{spec}_0(\mathcal{M}_0(q)) = \left\{ i\frac{q^2}{1+k^2},\; -i\frac{q^2}{1+k^2} \right\}.
\]

From  (\ref{y2.9}) we have
\[
\partial_z w = -a\sin z -2a^2e_2\sin2z-3a^3e_3\sin3z+O(a^4).
\]
To obtain a vector that tends to $\sin z$ as $a\to0$, we define
\[
 \eta_a^0(z):=-\frac1a\,\partial_z w(z)=\sin z+2a e_2\sin2z+3a^2e_3\sin3z+O(a^3).
\]
Clearly $\eta_a^0$ is odd in $z$ and satisfies $\mathcal{M}_a(0)\eta_a^0=0$.

For $a=0$, the second basis vector of the two‑dimensional eigenspace of $\mathcal{M}_0(q)$  is $\cos z$, which is even. 
We choose an even vector $\eta_a^1(z)$ such that $\eta_a^1\to\cos z$ as $a\to0$.
Set
\[
\eta_a^1(z):=\cos z+a\psi_1(z)+a^2\psi_2(z)+O(a^3),
\]
where $\psi_1$ and $\psi_2$ are even. For $q=0$, the restriction of $\mathcal{M}_a(0)$ to the two‑dimensional subspace $\mathcal{X}_a(0)$ must satisfy
\[
\mathcal{M}_a(0)\eta_a^1(z)=\mu(a)\,\eta_a^0(z),\qquad \mu(a)=O(a^2),
\]
since $\eta_a^0$ spans the kernel of $\mathcal{M}_a(0)$ and the other direction is mapped into the kernel. Using (\ref{y2.9}) and
 (\ref{y2.10}) and the explicit form of $\mathcal{M}_a(0)$  and collecting powers of $a$, we obtain 
\[
\psi_1(z)=2e_2\cos2z,\qquad \psi_2(z)=3e_3\cos3z,
\]
where $e_2$ and $e_3$ are given in (\ref{zje2}) and (\ref{zje3}), respectively.  Thus,
\[
 \eta_a^1(z)=\cos z+2a e_2\cos2z+3a^2e_3\cos3z+O(a^3).
\]

Next, we construct the matrix of $\mathcal{M}_a(0)$ in the basis $\{\eta_a^1,\eta_a^0\}$.
For $q=0$, the operator $\mathcal{M}_a(0)$ is given by
\[
\mathcal{M}_a(0) = (1-k^2\partial_z^2)^{-1}\mathcal{L}[w],
\]
with $\mathcal{L}[w]$ defined in (\ref{tag3.2}).  
Since $\mathcal{M}_a(0)$ maps even functions to odd functions, $\mathcal{M}_a(0)\eta_a^1$ is odd and therefore must be a multiple of $\eta_a^0$.
 Hence the matrix representation of $\mathcal{M}_a(0)$ in the  basis $\{\eta_a^1,\eta_a^0\}$ is of the form
\[
M_a(0)=\begin{pmatrix}
0 & 0\\
m_{10}(a) & 0
\end{pmatrix},
\]
where $m_{10}(a)$ is to be determined.
By calculation, we have
\[
\mathcal{M}_a(0)\eta_a^1 = \left( \frac{(7-b) (b+2) k^4+2 (b+1) (b+8) k^2+2 (b+1)^2}{6 b k^2} a^2   + O (a^3) \right ) \eta_a^0.
\]
Thus,
\[
m_{10}(a) = \frac{(7-b) (b+2) k^4+2 (b+1) (b+8) k^2+2 (b+1)^2}{6 b k^2} a^2   + O (a^3),
\]
and the matrix representation of $\mathcal{M}_a(0)$ in the basis $\{\eta_a^1,\eta_a^0\}$ is
\begin{equation}\label{Ma0}
M_a(0)=\begin{pmatrix}
0 & 0\\[6pt]
\frac{(7-b) (b+2) k^4+2 (b+1) (b+8) k^2+2 (b+1)^2}{6 b k^2} a^2   + O (|a|^3) & 0
\end{pmatrix}.
\end{equation}
From (\ref{M0ell}) and  (\ref{Ma0}), we have
\begin{equation}\label{Maell}
M_a(q)=\begin{pmatrix}
0 & \dfrac{ q^2}{1+k^2}\\[6pt]
\dfrac{-q^2}{1+k^2} + \frac{(7-b) (b+2) k^4+2 (b+1) (b+8) k^2+2 (b+1)^2}{6 b k^2} a^2 + O\Big( |a|(q^2+a^2) \Big) & 0
\end{pmatrix}.
\end{equation}
The eigenvalues of $M_a(q)$ are roots of the characteristic polynomial
\[
\begin{aligned}
& \mathrm{det} ( \lambda I - M_a(q)) \\
&=\lambda^2 + \dfrac{q^2}{1+k^2} \left( \dfrac{q^2}{1+k^2} - \frac{(7-b) (b+2) k^4+2 (b+1) (b+8) k^2+2 (b+1)^2}{6 b k^2} a^2 \right)
 + O\Big( |a|(q^2+a^2) \Big),
\end{aligned}
\]
here $I$ denotes the identity matrix.
Thus, we have
\[
\begin{aligned}
\lambda^2 &= - \dfrac{q^2}{1+k^2} \left( \dfrac{q^2}{1+k^2} - \frac{(7-b) (b+2) k^4+2 (b+1) (b+8) k^2+2 (b+1)^2}{6 b k^2} a^2 \right)
 + O\Big( |a|(q^2+a^2) \Big)\\
 &= - \dfrac{q^2}{(1+k^2)^2} \left( q^2 - q_a^2 \right)
 + O\Big( |a|(q^2+a^2) \Big),
\end{aligned}
\]
where
\begin{equation}\label{ella2}
q_a^2 :=    \frac{(7-b) (b+2) k^4+2 (b+1) (b+8) k^2+2 (b+1)^2}{6 b k^2} (k^2+1) a^2.
\end{equation}

From the above analysis, we derive critical results on the transverse spectral stability of periodic  perturbations.
\begin{theorem}[Transverse instability for $b$-Novikov]\label{thm:NI}
 Assume that $|a|$ and $|q|$ are sufficiently small.  Define $q_a^2$ by (\ref{ella2}).  

If $q_a^2>0$, then there exists $q_*^2 = q_a^2+O(a^4)>0$ such that
  \begin{enumerate}[(i)]
    \item for $q^2>q_*^2$ the spectrum of $\mathcal{M}_a(q)$ is purely imaginary (transverse spectral stability);
    \item for $q^2<q_*^2$ the spectrum contains a pair of real eigenvalues of opposite signs (transverse spectral instability).
  \end{enumerate}
  
 If $q_a^2<0$, then the spectrum of $\mathcal{M}_a(q)$ is purely imaginary.

\end{theorem}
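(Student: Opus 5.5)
The proof will follow the standard splitting argument used for KP-type transverse problems. It separates $\operatorname{spec}_1(\mathcal{M}_a(q))$, which stays purely imaginary, from the two eigenvalues in $\operatorname{spec}_0(\mathcal{M}_a(q))$ near the origin.

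\emph{Step 1: $\operatorname{spec}_1$ stays on the imaginary axis.} For $|a|,|q|$ small, the eigenvalues $i\omega_n$ with $|n|\ge2$ of $\mathcal{M}_0(0)$ are simple, purely imaginary, and uniformly separated. Their Krein signatures, given by (\ref{ta4.6}), are all positive. The perturbation bound $\|\mathcal{M}_a(q)-\mathcal{M}_0(0)\|_{H^1\to L^2}=O(q^2+|a|)$ is already established. I would also use the reversibility symmetry: $\mathcal{M}_a(q)$ anticommutes with the reflection $z\mapsto -z$ (it maps even functions to odd and conversely) and has real coefficients. Hence the spectrum is symmetric under $\lambda\mapsto-\lambda$ and $\lambda\mapsto\bar\lambda$. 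A simple isolated imaginary eigenvalue therefore cannot leave the imaginary axis under small perturbation. This shows that $\operatorname{spec}_1(\mathcal{M}_a(q))\subset i\mathbb{R}$, as asserted before the theorem.

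\emph{Step 2: reduce to the $2\times2$ matrix.} On the two-dimensional invariant subspace $\mathcal{X}_a(q)$ I would use the basis $\{\eta_a^1,\eta_a^0\}$ (after spectral projection $\Pi_a(q)$, with error $O(q^2+|a|)$). The parity structure gives exactly the antidiagonal form (\ref{Maell}): even basis vectors are mapped to odd ones and conversely, so the diagonal entries vanish identically, not just to leading order. The characteristic polynomial is then
\[
\lambda^2=-m_{01}(a,q)\,m_{10}(a,q),
\]
with $m_{01}=\frac{q^2}{1+k^2}+O(|a|q^2)$ and $m_{10}=-\frac{q^2}{1+k^2}+\frac{q_a^2}{1+k^2}+O(|a|(q^2+a^2))$. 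Because $\lambda^2$ is real and $m_{01}>0$ for $q\neq0$, the sign of $\lambda^2$ equals the sign of $m_{10}$.

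\emph{Step 3: sign analysis.} Consider first $q_a^2>0$. I would apply the implicit function theorem to $m_{10}(a,q)=0$ in the variable $q^2$, for fixed small $a$. The function is analytic in $(a,q^2)$, and $\partial_{q^2}m_{10}=-\frac{1}{1+k^2}+O(|a|)\neq0$. This yields a unique root $q_*^2=q_a^2+O(a^4)$; the error order uses the evenness in $a$ of the $a^3$-level correction, inherited from the evenness of $c$ and of $w(z+\pi;-a)=w(z;a)$. For $q^2>q_*^2$ we have $m_{10}<0$, so $\lambda^2<0$ and the eigenvalues are purely imaginary. For $q^2<q_*^2$ we have $\lambda^2>0$, giving a real pair $\pm\lambda$. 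Now consider $q_a^2<0$. Then $m_{10}\le -\frac{q^2+|q_a^2|}{1+k^2}(1+O(|a|))<0$ for all small $q$. I would absorb the remainder $O(|a|(q^2+a^2))$ into the leading terms using $q_a^2\sim a^2$, so $\lambda^2<0$. Combining with Step 1 and with the earlier lemma for $|q|>q^*$ completes the argument.

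\emph{Main obstacle.} The delicate point is the error control in $m_{10}$. One needs the remainder to be $O(|a|q^2+a^4)$ rather than $O(|a|^3)$; otherwise the threshold shift would be $O(a^3)$, and when $q_a^2<0$ the sign could be uncertain. I would handle this first by the symmetry $a\mapsto -a$ (a shift $z\mapsto z+\pi$), which forces $m_{10}$ and $m_{01}$ to be even in $a$. Second, I would use the exact identity $\mathcal{M}_a(0)\eta_a^0=0$ to see that the $q$-independent part of $m_{01}$ vanishes identically.
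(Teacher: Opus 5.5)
Your proposal is correct and follows the paper's own argument---splitting off $\operatorname{spec}_1(\mathcal{M}_a(q))$, reducing to the antidiagonal $2\times2$ matrix in the basis $\{\eta_a^1,\eta_a^0\}$, and reading the dichotomy off the sign of $m_{10}$---except for one slip: you wrote $\lambda^2=-m_{01}m_{10}$ where it should be $\lambda^2=m_{01}m_{10}$, though your subsequent sign reasoning already uses the correct relation. Your $z\mapsto z+\pi$, $a\mapsto -a$ evenness argument is a worthwhile addition rather than a change of route: the paper's remainder $O(|a|(q^2+a^2))$ alone gives only $q_*^2=q_a^2+O(|a|^3)$, which already suffices for all the sign conclusions (including the case $q_a^2<0$, contrary to your ``main obstacle'' remark), and it is the resulting $O(a^2q^2+a^4)$ remainder in $m_{10}$---not merely $O(|a|q^2+a^4)$---that justifies the stated $O(a^4)$.
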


Next, we present a lemma for determining the sign of $q_a^2$.
\begin{lemma}
When $0<b\le 7$, we have $q_a^2>0$. When $b>7$ and  $0<k^2<p_0(b)$, we have $q_a^2>0$,
 where 
\begin{equation}\label{x0b}
p_0(b):=\frac{(b+1)\Big[ (b+8)+ \sqrt{ 3(b+1)^2+33 } \Big]}{(b-7)(b+2)}.
\end{equation}
When $b>7$ and   $k^2>p_0(b)$, we have $q_a^2<0$.
 
\end{lemma}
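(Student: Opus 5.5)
The sign of $q_a^2$ in (\ref{ella2}) is reduced to the sign of a quadratic polynomial in $p:=k^2$. Since $b>0$, $k>0$ and $a\neq0$ (the nontrivial wave), the prefactor $\frac{(k^2+1)a^2}{6bk^2}$ is strictly positive. Hence $\operatorname{sgn}(q_a^2)=\operatorname{sgn}N_b(p)$, where
\[
N_b(p):=(7-b)(b+2)\,p^2+2(b+1)(b+8)\,p+2(b+1)^2,\qquad p>0 .
\]
Two elementary facts drive the whole argument. The constant term is $N_b(0)=2(b+1)^2>0$. The linear coefficient $2(b+1)(b+8)$ is positive for all $b>0$. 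Only the leading coefficient $(7-b)(b+2)$ can change sign, and it does so at $b=7$.

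\emph{Case $0<b\le 7$.} All three coefficients of $N_b$ are nonnegative and the constant term is strictly positive. Therefore $N_b(p)>0$ for every $p>0$, so $q_a^2>0$. When $b=7$, $N_b$ is affine with positive slope and intercept, and the conclusion is the same.

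\emph{Case $b>7$.} Now the leading coefficient is negative while $N_b(0)>0$. The product of the roots equals $2(b+1)^2/\big((7-b)(b+2)\big)<0$, so $N_b$ has exactly one positive root $p_0$ and one negative root. It follows that $N_b>0$ on $(0,p_0)$ and $N_b<0$ on $(p_0,\infty)$.

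It remains to identify $p_0$ with (\ref{x0b}). The quarter-discriminant is
\[
(b+1)^2(b+8)^2-2(7-b)(b+2)(b+1)^2=(b+1)^2\big(3b^2+6b+36\big)=(b+1)^2\big(3(b+1)^2+33\big).
\]
The quadratic formula then gives the roots
\[
p=\frac{-(b+1)(b+8)\pm(b+1)\sqrt{3(b+1)^2+33}}{(7-b)(b+2)} .
\]
The positive root comes from the minus sign, because both the numerator and the denominator are then negative. Rewriting it gives exactly $p_0(b)=\frac{(b+1)\big[(b+8)+\sqrt{3(b+1)^2+33}\big]}{(b-7)(b+2)}$.

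The only step needing care is this discriminant simplification and the choice of sign for the root. Everything else is a direct sign inspection.
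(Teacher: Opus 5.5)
Your proposal is correct and follows the paper's proof essentially step for step. You reduce to the sign of the quadratic $N(p)$ in $p=k^2$, settle $0<b\le 7$ by nonnegativity of the coefficients, and for $b>7$ use the downward-opening parabola with one positive and one negative root, identifying the positive root via the discriminant $(b+1)^2\bigl(3(b+1)^2+33\bigr)$. Two small points go slightly beyond the paper: you justify the opposite signs of the roots with the product of the roots, and you note explicitly that $a\neq 0$ is needed for strict positivity.
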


\begin{proof}

Since $b>0$,  $q_a^2$ shares the same sign as
$$
(7-b)(b+2)k^4+2(b+1)(b+8)k^2+2(b+1)^2.
$$
Set $p=k^2>0$ and denote 
\[
N(p):=(7-b)(b+2)p^2+2(b+1)(b+8)p+2(b+1)^2.
\]

 When  $0<b\le 7$,  $(7-b)(b+2)\ge 0$ and $2(b+1)(b+8)>0$,  $N(p)>0$ for all $p>0$. Hence  $q_a^2>0$.

 When $b>7$, $(7-b)(b+2)<0$,  the quadratic $N(p)$ opens downward. Since the discriminant is
$
4(b+1)^2\Bigl[3(b+1)^2+33\Bigr]>0,
$
 $N(p)$ has two real roots, one positive and one negative. The positive root is
\[
p_0(b)=\frac{(b+1)\Big[ (b+8)+ \sqrt{ 3(b+1)^2+33 } \Big]}{(b-7)(b+2)}.
\]
 If $0<p<p_0(b)$, then $N(p)>0$, $q_a^2>0$.
 If $p>p_0(b)$, then $N(p)<0$,  $q_a^2<0$.

\end{proof}

\begin{remark}
If the term $-u_{yy}$ in equation (\ref{y1.3}) is replaced by $u_{yy}$,
the resulting equation can be called as the 
$b$-Novikov-II equation. Its spectral stability under periodic perturbations can be investigated using an analogous approach. 
Since the procedure is largely parallel to that for equation (\ref{y1.3}), we omit the details here.
\end{remark}

\begin{remark}

In our formulation, the operator $\mathcal{M}_a(q)$ incorporates the projection $\mathcal{P}_0$ 
via a mean-value correction term of the form
\[
-3(b-2)k^2\bigl(1-k^2\partial_z^2\bigr)^{-1}\langle w' w'' v\rangle,
\]
where $\langle\cdot\rangle$ denotes the period average over one spatial period.

     Since the periodic traveling wave profile $w(z)$ is an even function of $z$, it follows that  $w'w''$ is odd.
      Since $\eta_a^1$ is even, the product $w'w''\eta_a^1$ 
    remains an odd function. The period average of any odd periodic function over a full period equals zero, i.e.,
    \[
    \langle w'w'' \eta_a^1 \rangle = 0.
    \]
    Therefore the projection correction term contributes nothing to $\mathcal{M}_a(q)\eta_a^1$.
    
    Next consider the odd basis vector $\eta_a^0$, which spans the kernel of $\mathcal{M}_a(0)$, i.e., $\mathcal{M}_a(0)\eta_a^0 = 0$. By definition,
\[
\mathcal{M}_a(0) = \bigl(1-k^2\partial_z^2\bigr)^{-1}\big(\mathcal{L}[w] - \mathcal{P}_0\mathcal{L}[w]\big).
\]
Combining $\mathcal{M}_a(0)\eta_a^0 = 0$ with the identity $\mathcal{L}[w]\eta_a^0 = 0$, we  obtain $\mathcal{P}_0\mathcal{L}[w]\eta_a^0 = 0$. Therefore the projection term also has no effect on $\eta_a^0$.

The key matrix entry $m_{10}(a)$, which encodes the finite-amplitude correction to the unperturbed spectral problem, is computed by projecting $\mathcal{M}_a(0)\eta_a^1$ onto $\eta_a^0$. As shown above, the projection operator $\mathcal{P}_0$ yields zero on both basis vectors of $\mathcal{X}_a(q)$, so it does not modify the expression of $m_{10}(a)$. Consequently, the critical transverse wavenumber threshold $q_a^2$ is fully independent of $\mathcal{P}_0$, and all transverse spectral stability conclusions stated in Theorem \ref{thm:NI} remain unchanged if the projection term is omitted entirely.

\end{remark}

\section{Non-periodic perturbations }
 
This section focuses on two-dimensional non-periodic perturbations, which may take localized or bounded forms along the 
 propagation direction of waves. 
We examine the invertibility of \(\mathcal{W}_a(\lambda,q)\) from equation (\ref{y3aq2}) over the spaces \(L^2(\mathbb{R})\) and \(C_{\mathrm{bdd}}(\mathbb{R})\), with domain spaces \(H^4(\mathbb{R})\) and \(C_{\mathrm{bdd}}^4(\mathbb{R})\) respectively, for all \(\lambda\in\mathbb{C}\) with positive real part and every nonzero real transverse wavenumber \(q\).

\subsection{ Reformulation via Floquet theory and fundamental lemmas}

Given that all coefficients of \(\mathcal{W}_a(\lambda,q)\) are \(2\pi\)-periodic with respect to the variable $z$, 
we apply the classical Floquet–Bloch decomposition method. Every solution $v$ belonging to either \(L^2(\mathbb{R})\) or
 \(C_{\mathrm{bdd}}(\mathbb{R})\) satisfying the homogeneous equation \(\mathcal{W}_a(\lambda,q)v=0\) can be
  expressed as \(v(z) = e^{i\eta z}\phi(z)\), where the Floquet exponent \(\eta\) falls within the 
  interval \((-\tfrac12,\tfrac12]\), and $\phi(z)$ is a $2\pi$-periodic function.

\begin{lemma}[Floquet reduction]
The operator $\mathcal{W}_a(\lambda,q)$ is invertible on $L^2(\mathbb{R})$ if and only if 
for all Floquet exponents $\eta\in(-\frac{1}{2},\frac{1}{2}]$, the associated Bloch operator
\[
\begin{aligned}
    \mathcal{W}_{a, \eta}(\lambda, q)
= \big(1-k^{2} \left(\partial_z + i \eta\right)^{2}\big)\left(\partial_z + i \eta\right)
\Bigg[\lambda -c \left(\partial_z + i \eta\right) + \big(1-k^{2}\left(\partial_z +i \eta\right)^{2}\big)^{-1} \bigl( \widetilde{Q}_\eta \bigr) \Bigg] 
+ q^{2}
\end{aligned}
\]
is invertible on $L^2(\mathbb{T})$ with domain $H^4(\mathbb{T})$, where
\[
\begin{aligned}
\widetilde{Q}_\eta=& - b k^2\big(w_z w_{zz}+w w_{zz}\left(\partial_z+i\eta\right)+w w_z\left(\partial_z+i\eta\right)^2\big)\\
&+(1+b) ( 2w w_z+ w^2 \left(\partial_z+i\eta\right) )- k^2\big( 2 w w_{zzz} + w^2 (\partial_z+i\eta)^3 \big).
\end{aligned}
\]
\end{lemma}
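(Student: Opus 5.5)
The plan is to use the standard Bloch transform, which gives a unitary decomposition of $L^2(\mathbb{R})$ into a direct integral of $L^2(\mathbb{T})$ fibers. For $v\in L^2(\mathbb{R})$ we set
\[
(\mathcal{B}v)(\eta,z)=\tilde v(\eta,z):=\sum_{m\in\mathbb{Z}}\widehat v(\eta+m)e^{imz},\qquad \eta\in(-\tfrac12,\tfrac12],
\]
so that $v(z)=\int_{-1/2}^{1/2}e^{i\eta z}\tilde v(\eta,z)\,d\eta$. Parseval's identity gives $\|v\|_{L^2(\mathbb{R})}^2=2\pi\int_{-1/2}^{1/2}\|\tilde v(\eta,\cdot)\|_{L^2(\mathbb{T})}^2d\eta$. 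Likewise, $\|v\|_{H^s(\mathbb{R})}$ is equivalent to the $L^2(d\eta)$-norm of $\|\tilde v(\eta,\cdot)\|_{H^s(\mathbb{T})}$, with constants uniform in $\eta$, because $(1+|\eta+m|^2)$ and $(1+m^2)$ are comparable for $|\eta|\le\frac12$.

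First we check that $\mathcal{W}_a(\lambda,q)$ commutes with the Bloch transform fiberwise. Its coefficients are smooth and $2\pi$-periodic. Moreover $\partial_z(e^{i\eta z}\phi)=e^{i\eta z}(\partial_z+i\eta)\phi$, and the Fourier multiplier $(1-k^2\partial_z^2)^{-1}$ acts as $(1-k^2(\partial_z+i\eta)^2)^{-1}$ on the fiber; this is well defined since $1+k^2(m+\eta)^2\ge1$. Together these give
\[
\mathcal{B}\big(\mathcal{W}_a(\lambda,q)v\big)(\eta,\cdot)=\mathcal{W}_{a,\eta}(\lambda,q)\,\tilde v(\eta,\cdot).
\]
We also note that $\widetilde Q(v)$ expands to exactly the expression $\widetilde Q_\eta$ once each $\partial_z$ is replaced by $\partial_z+i\eta$. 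Hence $\mathcal{W}_a(\lambda,q)$ is unitarily equivalent to the decomposable operator $\int^\oplus\mathcal{W}_{a,\eta}(\lambda,q)\,d\eta$ acting from $\int^\oplus H^4(\mathbb{T})$ to $\int^\oplus L^2(\mathbb{T})$.

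For the implication ``$\Leftarrow$'', we assume each $\mathcal{W}_{a,\eta}$ is invertible $H^4(\mathbb{T})\to L^2(\mathbb{T})$. The family $\eta\mapsto\mathcal{W}_{a,\eta}(\lambda,q)$ is a polynomial in $\eta$ with bounded operator coefficients $H^4\to L^2$, so it is analytic. The inverses therefore depend continuously on $\eta$ on the compact set $[-\frac12,\frac12]$ (identifying the endpoints via $\phi\mapsto e^{iz}\phi$). This yields the uniform bound $\sup_\eta\|\mathcal{W}_{a,\eta}^{-1}\|_{L^2\to H^4}<\infty$. The inverse on $L^2(\mathbb{R})$ is then defined fiberwise, $v=\mathcal{B}^{-1}\big(\mathcal{W}_{a,\eta}^{-1}\tilde f(\eta,\cdot)\big)$, and it is bounded into $H^4(\mathbb{R})$ by the norm equivalences above.

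For ``$\Rightarrow$'', we argue by contraposition: suppose $\mathcal{W}_{a,\eta_0}$ fails to be invertible for some $\eta_0$. Each $\mathcal{W}_{a,\eta}$ is Fredholm of index zero, being a fourth-order elliptic operator (leading coefficient $-k^2(c-w^2)\partial_z^4$ up to the factor structure, nonvanishing for small $a$ since $c_0>w_0^2$) plus a relatively compact perturbation on $\mathbb{T}$. Non-invertibility therefore means there is a kernel element $\phi_0$. We then construct Weyl sequences $v_n(z)=\chi(z/n)e^{i\eta_0z}\phi_0(z)$, where $\chi$ is a smooth cutoff, normalized in $H^4(\mathbb{R})$. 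The commutators with the cutoff are $O(1/n)$, so $\|\mathcal{W}_a v_n\|_{L^2}\to0$ while $\|v_n\|_{H^4}\approx 1$. This contradicts boundedness of the inverse on $L^2(\mathbb{R})$. The main obstacle is precisely this direction, namely justifying the Fredholm index-zero property of the Bloch operators (which reduces non-invertibility to a nontrivial kernel) and the uniform-in-$\eta$ control. Both follow from ellipticity of the principal part near $a=0$, where the leading coefficient is a nonzero constant, together with compactness of $H^4(\mathbb{T})\hookrightarrow H^3(\mathbb{T})$.
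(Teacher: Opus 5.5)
\paragraph{Verdict.} Your proposal is correct. The paper gives no proof of this lemma to compare it with: the lemma is quoted as the classical Floquet--Bloch reduction. The only justification offered is the preceding remark that solutions of $\mathcal{W}_a(\lambda,q)v=0$ can be written as $e^{i\eta z}\phi(z)$ with $\phi$ $2\pi$-periodic. That remark concerns bounded solutions, since no nonzero function of that form lies in $L^2(\mathbb{R})$. Your argument supplies a complete proof of the $L^2(\mathbb{R})$ statement, and the Bloch transform is the right tool for it.

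\paragraph{What works.} Each step of your argument holds up.
\begin{enumerate}
\item The fiberwise identity $\mathcal{B}(\mathcal{W}_a v)(\eta,\cdot)=\mathcal{W}_{a,\eta}\tilde v(\eta,\cdot)$ is correct.
\item The $H^s(\mathbb{R})$ norm is equivalent to the $L^2(d\eta;H^s(\mathbb{T}))$ norm, uniformly in $\eta$.
\item For the ``if'' direction, the endpoint identification $\mathcal{W}_{a,\eta+1}=e^{-iz}\mathcal{W}_{a,\eta}e^{iz}$, combined with norm-continuity of $\eta\mapsto\mathcal{W}_{a,\eta}^{-1}$ on the compact interval, gives the uniform bound you need.
\item For the ``only if'' direction, Fredholm index zero reduces non-invertibility of a fiber to a nontrivial kernel. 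The cutoff Weyl sequence then gives $\|\mathcal{W}_a v_n\|_{L^2}=O(n^{-1/2})$ while $\|v_n\|_{H^4}\gtrsim n^{1/2}$, which contradicts a bounded inverse.
\end{enumerate}

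\paragraph{Two details to state explicitly.}
\begin{enumerate}
\item Both the polynomial dependence on $\eta$ and the $O(1/n)$ commutator bound rely on the factor $(1-k^2\partial_z^2)$ cancelling its inverse. This makes $\mathcal{W}_a(\lambda,q)v=(1-k^2\partial_z^2)\partial_z(\lambda v-cv_z)+\partial_z\widetilde{Q}(v)+q^2v$ a local differential operator with smooth periodic coefficients.
\item The principal part is $+k^2(c-w^2)\partial_z^4$, not $-k^2(c-w^2)\partial_z^4$. This sign slip is harmless, because you only use that $c-w^2=\frac{b}{1+k^2}w_0^2+O(a)$ does not vanish for small $|a|$. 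That nonvanishing, together with the compact embedding $H^4(\mathbb{T})\hookrightarrow H^3(\mathbb{T})$, gives the index-zero Fredholm property for every $\eta$.
\end{enumerate}
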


As \(\eta=0\) corresponds to the periodic perturbations investigated  in section 4, we focus  on \(\eta \neq 0\).
For $\eta\neq 0$, the operator $(\partial_z+i\eta)$ is invertible on $L^2(\mathbb{T})$. Applying $(\partial_z+i\eta)^{-1}$ to the equation 
$ \mathcal{W}_{a,\eta}(\lambda,q)v=0$, we obtain the equivalent spectral operator.

\begin{lemma}[Spectral equivalence]
For $\eta\in(-\frac{1}{2},\frac{1}{2}]$ and $\eta \neq 0$, $ \mathcal{W}_{a,\eta}(\lambda,q)$ is non-invertible if and only if $\lambda$
 belongs to the spectrum of
\[
\mathcal{M}_a(q,\eta)
= \big(1-k^2(\partial_z+i\eta)^2\big)^{-1} \mathcal{L}_{ \eta}[w]
- q^2 \big(1-k^2(\partial_z+i\eta)^2\big)^{-1}\big(\partial_z+i\eta\big)^{-1}
\]
acting on $L^2(\mathbb{T})$ with domain $H^3(\mathbb{T})$,
where
\begin{equation}
\begin{aligned}
\mathcal{L}_{ \eta}[w] =&c (\partial_{z}+ i \eta) -ck^{2}(\partial_{z}+ i \eta)^{3}-2(1+b)ww_{z}-(1+b)w^{2}(\partial_{z}+ i \eta)  \\
 & +bk^2(w_zw_{zz}+ww_{zz}(\partial_{z}+ i \eta) +ww_z(\partial_{z}+ i \eta)^2)+k^2\left(w^2(\partial_{z}+ i \eta)^3+2ww_{zzz}\right).
\end{aligned}
\end{equation}
\end{lemma}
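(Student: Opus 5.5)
The plan is to factor $\mathcal{W}_{a,\eta}(\lambda,q)$ into invertible operators and the operator $\lambda-\mathcal{M}_a(q,\eta)$. Throughout, write $D_\eta:=\partial_z+i\eta$ and $A_\eta:=1-k^2D_\eta^2$.

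\textbf{Step 1: invertibility of the constant-coefficient factors.} On the Fourier basis,
\[
D_\eta e^{inz}=i(n+\eta)e^{inz},\qquad A_\eta e^{inz}=\bigl(1+k^2(n+\eta)^2\bigr)e^{inz}.
\]
Since $\eta\in(-\tfrac12,\tfrac12]\setminus\{0\}$, we have $n+\eta\neq0$ for every $n\in\mathbb{Z}$, and in fact $|n+\eta|\ge\min(|\eta|,\tfrac12)>0$. Hence:
\begin{enumerate}
\item[(a)] $D_\eta:H^{s+1}(\mathbb{T})\to H^s(\mathbb{T})$ is an isomorphism, and $D_\eta^{-1}$ is bounded on $L^2(\mathbb{T})$;
\item[(b)] $A_\eta:H^{s+2}(\mathbb{T})\to H^s(\mathbb{T})$ is an isomorphism, because $1+k^2(n+\eta)^2\ge1$;
\item[(c)] $D_\eta$ and $A_\eta$ commute, being Fourier multipliers.
\end{enumerate}

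\textbf{Step 2: factorization.} Expanding the bracket in the Bloch operator of the Floquet reduction lemma, and using $\widetilde Q_\eta=-\bigl(\mathcal{L}_\eta[w]-cD_\eta+ck^2D_\eta^3\bigr)$ (read off by comparing the two formulas term by term), the bracket becomes $A_\eta^{-1}\bigl(\lambda A_\eta-\mathcal{L}_\eta[w]\bigr)$. The sign convention should match the periodic derivation in Section 4, where the same expression appears as $(1-k^2\partial_z^2)\lambda v-\mathcal{L}[w]v$ inside the $z$-derivative. By commutativity,
\[
\mathcal{W}_{a,\eta}(\lambda,q)=D_\eta\bigl(\lambda A_\eta-\mathcal{L}_\eta[w]\bigr)+q^2
= D_\eta A_\eta\Bigl(\lambda-A_\eta^{-1}\mathcal{L}_\eta[w]+q^2A_\eta^{-1}D_\eta^{-1}\Bigr)
= D_\eta A_\eta\bigl(\lambda-\mathcal{M}_a(q,\eta)\bigr).
\]
An overall sign convention on $\mathcal{W}$ does not affect invertibility.

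\textbf{Step 3: equivalence of invertibility.} By Step 1, $D_\eta A_\eta:H^3(\mathbb{T})\to L^2(\mathbb{T})$ is an isomorphism. Therefore $\mathcal{W}_{a,\eta}(\lambda,q):H^4(\mathbb{T})\to L^2(\mathbb{T})$ is invertible if and only if $\lambda-\mathcal{M}_a(q,\eta)$ is invertible as a map $H^4\to H^3$.

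It remains to pass to $\mathcal{M}_a(q,\eta)$ on $L^2(\mathbb{T})$ with domain $H^3(\mathbb{T})$. The operator $A_\eta^{-1}\mathcal{L}_\eta[w]$ has leading part $k^2(w^2-c)A_\eta^{-1}D_\eta^3$, which is of order one. The remaining terms, together with $q^2A_\eta^{-1}D_\eta^{-1}$, are relatively bounded of lower order. For small $|a|$ we have $c-w^2$ close to $c_0-w_0^2>0$, so this leading part is a nondegenerate first-order operator.

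Consequently $\lambda-\mathcal{M}_a(q,\eta)$ is closed, and its resolvent set is the same whether it is viewed on $L^2$ or on $H^3$. The reason is elliptic-type regularity: from $(\lambda-\mathcal{M})v=f\in H^3$ one bootstraps $v\in H^4$, using nondegeneracy of the leading coefficient.

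\textbf{Step 4: spectrum versus non-invertibility.} Finally, observe that $\mathcal{M}_a(q,\eta)$ has compact resolvent. Its leading part has symbol of size comparable to $|n|$, and the perturbation is lower order, so the spectrum consists only of eigenvalues. Hence non-invertibility is the same as the existence of a nontrivial kernel, and the kernels correspond exactly through the factorization: $v$ solves $\mathcal{W}_{a,\eta}(\lambda,q)v=0$ if and only if $\mathcal{M}_a(q,\eta)v=\lambda v$.

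\textbf{Main obstacle.} The only delicate point is the careful sign bookkeeping in Step 2. A secondary technicality is the domain regularity argument in Step 3, showing that invertibility in the $H^4\to H^3$ setting coincides with the $L^2$ spectral notion. I would handle it by the compact-resolvent/eigenvalue argument, which reduces both notions to existence of a nontrivial kernel.
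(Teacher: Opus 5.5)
Your factorization $\mathcal{W}_{a,\eta}(\lambda,q)=D_\eta A_\eta\bigl(\lambda-\mathcal{M}_a(q,\eta)\bigr)$ is correct, including the sign identity $\widetilde{Q}_\eta=-(\mathcal{L}_\eta[w]-cD_\eta+ck^2D_\eta^3)$, and it is the paper's argument: the paper applies $(\partial_z+i\eta)^{-1}$ to $\mathcal{W}_{a,\eta}(\lambda,q)v=0$ and then inverts $1-k^2(\partial_z+i\eta)^2$; your Steps 3--4 add the regularity and compact-resolvent details that the paper leaves implicit. One small adjustment: $\mathcal{M}_a(q,\eta)$ is of order one, so the closed $L^2(\mathbb{T})$ realization that your Step 3 relies on has domain $H^1(\mathbb{T})$ (the closure of the operator defined on $H^3(\mathbb{T})$), and the ``domain $H^3$'' in the statement should be read in that sense.
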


As \(\eta\) approaches zero, the operator \((\partial_z + i\eta)^{-1}\) becomes singular. It follows that connections between the spectrum
 of \(\mathcal{M}_a(q,\eta)\) and the invertibility of \(\mathcal{W}_{a,\eta}(\lambda,q)\) lack uniformity across all admissible \(\eta\).
 Hence we focus on the regime \(|\eta|> \epsilon> 0\) to locate the threshold for instability.

\begin{lemma}[Spectral symmetry]
For $\eta\in(-\frac{1}{2},\frac{1}{2}]$ and $\eta \neq 0$, then
the spectrum of $\mathcal{M}_a(q,\eta)$ is symmetric with respect to the imaginary axis. Moreover,
\[
\operatorname{spec}_{L^2(\mathbb{T})}(\mathcal{M}_a(q,\eta))=\operatorname{spec}_{L^2(\mathbb{T})}(-\mathcal{M}_a(q,-\eta)).
\]

\end{lemma}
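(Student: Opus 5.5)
Two symmetries, both coming from the fact that $w$ is real-valued and even in $z$ (Lemma~\ref{lem2.1wc}), together give the result. The first is complex conjugation $\mathcal{C}v:=\bar v$. Since all coefficients of $\mathcal{L}_\eta[w]$ are real and $\overline{(\partial_z+i\eta)\bar v}=(\partial_z-i\eta)v$, we get $\mathcal{C}\,\mathcal{M}_a(q,\eta)\,\mathcal{C}=\mathcal{M}_a(q,-\eta)$. The same conjugation rule applies to $(1-k^2(\partial_z+i\eta)^2)^{-1}$ and to $(\partial_z+i\eta)^{-1}$, because both are Fourier multipliers whose symbols are real up to powers of $i$. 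Hence $\lambda\in\operatorname{spec}\mathcal{M}_a(q,\eta)$ if and only if $\bar\lambda\in\operatorname{spec}\mathcal{M}_a(q,-\eta)$.

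The second is the reflection $\mathcal{R}v(z):=v(-z)$. Because $w$ is even, $w,w_{zz}$ are even and $w_z,w_{zzz}$ are odd, and $\mathcal{R}(\partial_z+i\eta)\mathcal{R}=-(\partial_z-i\eta)$. I will check term by term in $\mathcal{L}_\eta[w]$ that every term carries an odd total number of derivatives, counting both derivatives falling on $w$ and applied operators $(\partial_z+i\eta)$. For example, $ww_z$ has one, $w^2(\partial_z+i\eta)$ has one, $w_zw_{zz}$ has three, $ww_{zz}(\partial_z+i\eta)$ has three, $ww_z(\partial_z+i\eta)^2$ has three, $w^2(\partial_z+i\eta)^3$ has three, and $ww_{zzz}$ has three. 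It follows that $\mathcal{R}\mathcal{L}_\eta[w]\mathcal{R}=-\mathcal{L}_{-\eta}[w]$. The prefactor $(1-k^2(\partial_z+i\eta)^2)^{-1}$ becomes $(1-k^2(\partial_z-i\eta)^2)^{-1}$, and $(\partial_z+i\eta)^{-1}$ becomes $-(\partial_z-i\eta)^{-1}$. Therefore $\mathcal{R}\,\mathcal{M}_a(q,\eta)\,\mathcal{R}=-\mathcal{M}_a(q,-\eta)$. Since $\mathcal{R}$ is a unitary involution on $L^2(\mathbb{T})$ preserving $H^3(\mathbb{T})$, the two operators are similar, which proves the displayed identity $\operatorname{spec}(\mathcal{M}_a(q,\eta))=\operatorname{spec}(-\mathcal{M}_a(q,-\eta))$.

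Combining the two symmetries: if $\lambda\in\operatorname{spec}\mathcal{M}_a(q,\eta)$, then $-\lambda\in\operatorname{spec}\mathcal{M}_a(q,-\eta)$ by reflection, and conjugation then gives $-\bar\lambda\in\operatorname{spec}\mathcal{M}_a(q,\eta)$. The map $\lambda\mapsto-\bar\lambda$ is exactly reflection across the imaginary axis, which is the claimed symmetry. Equivalently, $\mathcal{RC}$ is an antilinear involution satisfying $(\mathcal{RC})\mathcal{M}_a(q,\eta)(\mathcal{RC})=-\mathcal{M}_a(q,\eta)$.

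The one point needing care is that "spectrum" here means the full spectrum of a closed operator, not just its eigenvalues. Conjugating by the antilinear isometry $\mathcal{C}$ maps $\mathcal{M}-\lambda$ to $\mathcal{M}'-\bar\lambda$, so bounded invertibility transfers both ways. In any case, $\mathcal{M}_a(q,\eta)$ has compact resolvent for $\eta\neq0$, being a relatively compact perturbation of a Fourier multiplier with symbol growing like $|n|$, so its spectrum consists of eigenvalues only. The main obstacle is simply the sign bookkeeping in the reflection step, in particular the $\eta\mapsto-\eta$ change inside the nonlocal inverses; I would verify it directly on Fourier modes $e^{inz}\mapsto e^{-inz}$, where $\partial_z+i\eta$ acts as $i(n+\eta)$ and reflection turns this into $-i(n-\eta)$, consistent with $-(\partial_z-i\eta)$.
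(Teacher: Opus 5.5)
Your argument is correct and is the same as the paper's, which simply invokes the reflection $\mathcal{R}$ and the antilinear symmetry $\mathcal{S}\psi(z)=\overline{\psi(-z)}=\mathcal{RC}\psi$ (deferring details to Chen et al.) and stresses that the evenness of $w$ is essential; your term-by-term parity check and the similarity argument (unitary $\mathcal{R}$, antilinear isometry $\mathcal{C}$) supply exactly the details the paper omits, and they handle the full spectrum on their own. One minor point: the compact-resolvent aside is unnecessary, and its stated justification is slightly off, since $\mathcal{M}_a(q,\eta)-\mathcal{M}_0(q,\eta)$ contains first-order terms such as $(1-k^2(\partial_z+i\eta)^2)^{-1}k^2(w^2-w_0^2)(\partial_z+i\eta)^3$ and is therefore only relatively bounded with small relative bound, not relatively compact.
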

\begin{proof}
The proof is similar to that in \cite{Chen2024MM}. By using the anti-commutation with the symmetry $\mathcal{S}\psi(z)=\overline{\psi(-z)}$ and the reflection $\mathcal{R}\psi(z)=\psi(-z)$ one obtains the two properties. The evenness of $w$ is essential.
\end{proof}

Thus, if $\lambda$ is an eigenvalue with eigenfunction $\psi$, then $-\overline{\lambda}$ is also an eigenvalue,
  which implies the imaginary-axis symmetry. By symmetry, we only consider $\eta\in(0,\frac{1}{2}]$ in the following analysis.
  It is easy to verify that
for small amplitude $|a|\ll1$, the perturbation term satisfies
\[
\big\|\mathcal{M}_a(q,\eta) - \mathcal{M}_0(q,\eta)   \big\|_{H^1(\mathbb{T})\to L^2(\mathbb{T})} = O(|a|)
\]
as $ a\to 0$ uniformly for all $\eta\in(0,\frac{1}{2}]$, where \(\mathcal{M}_0(q,\eta)\) is the operator associated with the constant solution
  \(w \equiv w_0\) and the corresponding constant wave speed \(c \equiv c_0\).

\subsection{ Spectrum of the unperturbed operator $\mathcal{M}_0(q,\eta)$}
Recall that
\[
c_0 = \frac{k^2+b+1}{k^2+1}w_0^2,\quad
\mathcal{L}[w_0] = -\frac{b k^2}{k^2+1}w_0^2\big(\partial_z+\partial_{z}^3\big).
\]
Replacing $\partial_z \mapsto \partial_z+i\eta$, we have
\[
\mathcal{L}_\eta[w_0] = -\frac{b k^2}{k^2+1}w_0^2\big((\partial_z+i\eta)+(\partial_z+i\eta)^3\big).
\]
Thus
\begin{equation}
\begin{aligned}
\mathcal{M}_0(q,\eta)
&= (-\frac{b k^2}{k^2+1}w_0^2)  \big(1-k^2(\partial_z+i\eta)^2\big)^{-1} \big((\partial_z+i\eta)+(\partial_z+i\eta)^3\big)\\
&\quad - q^2 \big(1-k^2(\partial_z+i\eta)^2\big)^{-1}\big(\partial_z+i\eta\big)^{-1}.
\end{aligned}
\end{equation}
Then we have
\[
\mathcal{M}_0(q,\eta)e^{inz} = i\omega_n(q,\eta)\, e^{inz},
\]
where 
\begin{equation}\label{omegan}
\omega_n(q,\eta)
= \frac{n+\eta}{1+k^2(n+\eta)^2}
\left(
b w_0^2 \cdot \frac{k^2\big((n+\eta)^2-1\big)}{1+k^2}
+ \frac{q^2}{(n+\eta)^2}
\right).
\end{equation}
We decompose $\mathcal{M}_0(q,\eta) = \mathcal{J}_\eta \mathcal{K}_0(q,\eta)$, where
\[
\mathcal{J}_\eta = (\partial_z + i\eta)\big(1-k^2(\partial_z+i\eta)^2\big)^{-1}
\]
is skew-adjoint, and $\mathcal{K}_0(q,\eta)$ is self-adjoint. Define
\begin{equation}\label{munellxi}
\mu_n(q,\eta):=\frac{b w_0^2 k^2}{1+k^2}\bigl((n+\eta)^2-1\bigr)+\frac{q^2}{(n+\eta)^2},
\end{equation}
then $\omega_n=\frac{n+\eta}{1+k^2(n+\eta)^2}\,\mu_n$.
 The Krein signature of the eigenvalue $i\omega_n$ is
\begin{equation}\label{Knxi}
K_{n,\eta}=\operatorname{sgn}\bigl(\mu_n(q,\eta)\bigr).
\end{equation}
Note that
\[
\mu_0=\frac{b w_0^2 k^2}{1+k^2}(\eta^2-1)+\frac{q^2}{\eta^2},\qquad
\mu_{-1}=\frac{b w_0^2 k^2}{1+k^2}\bigl((1-\eta)^2-1\bigr)+\frac{q^2}{(1-\eta)^2}.
\]
Since $\eta\in(0,\tfrac12]$, we have $\eta^2-1<0$ and $(1-\eta)^2-1=-\eta(2-\eta)<0$. Define the critical values
\begin{equation}\label{ell02}
q_0^2(\eta):=\frac{b w_0^2 k^2}{1+k^2}\,\eta^2(1-\eta^2),\qquad
q_{-1}^2(\eta):=\frac{b w_0^2 k^2}{1+k^2}\,\eta(2-\eta)(1-\eta)^2.
\end{equation}
For $\eta\in(0,\frac{1}{2}]$,  one checks that $q_0^2  \leq q_{-1}^2$,   with equality holding if and only if $\eta=\frac{1}{2}.$
 Then
\(\mu_0>0\) if and only if \(q^2>q_0^2\), and \(\mu_{-1}>0\) if and only if \(q^2>q_{-1}^2\).

From the above analysis and using (\ref{munellxi}) and (\ref{Knxi}), the following lemmas hold true. 

\begin{lemma}
Suppose that $\eta\in(0,\frac{1}{2}]$, $b>0$ and $q>0$.
 \begin{enumerate}[(i)]
\item When $n=1$ or $|n|\ge 2$, we have $(n+\eta)^2-1>0$, thus $K_{n,\eta}=1$.
\item For $q$ large enough, then $\mu_n>0$ for all $n$, and the spectrum is purely imaginary.
\item Only $n=0$ and $n=-1$ may cause sign changes and eigenvalue collisions, which are the only sources of instability.
 \end{enumerate}
 \end{lemma}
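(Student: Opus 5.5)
The plan is to read everything off the explicit formulas (\ref{munellxi}) and (\ref{Knxi}), treating the three items in order. Recall $\mu_n(q,\eta)=\frac{bw_0^2k^2}{1+k^2}\bigl((n+\eta)^2-1\bigr)+\frac{q^2}{(n+\eta)^2}$, with $b>0$ and $w_0>0$ from (\ref{y2.7}). Note also that $n+\eta\neq0$ for all $n\in\mathbb{Z}$, because $\eta\in(0,\tfrac12]$; hence every $\mu_n$ is well defined.

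\textbf{Item (i).} I will bound $|n+\eta|$ from below in each case.
For $n=1$ we have $n+\eta=1+\eta>1$.
For $n\ge2$ we have $n+\eta>2$.
For $n\le-2$ we have $n+\eta\le-2+\tfrac12=-\tfrac32$, so $|n+\eta|\ge\tfrac32$.
In every case $(n+\eta)^2-1>0$, so both summands of $\mu_n$ are positive and $\mu_n>0$. Therefore $K_{n,\eta}=1$ by (\ref{Knxi}).

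\textbf{Item (ii).} Only $n=0$ and $n=-1$ are not covered by (i). For these two modes, the formulas above (\ref{ell02}) show that $\mu_0>0$ exactly when $q^2>q_0^2(\eta)$, and $\mu_{-1}>0$ exactly when $q^2>q_{-1}^2(\eta)$. Both thresholds are bounded uniformly in $\eta$:
\[
q_0^2(\eta)\le q_{-1}^2(\eta)\le \frac{2bw_0^2k^2}{1+k^2}.
\]
So for $q^2$ above this constant, every $\mu_n$ is positive and $\mathcal{K}_0(q,\eta)$ is positive definite on $L^2(\mathbb{T})$. The spectrum of $\mathcal{M}_0(q,\eta)$ is $\{i\omega_n\}$, which is purely imaginary in any case.

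The step I expect to be the main obstacle is the claim for $\mathcal{M}_a(q,\eta)$ with small $|a|$, since eigenvalues could in principle accumulate and Krein arguments need uniformity. For this I would use the energy argument $\mathcal{M}_a=\mathcal{J}_\eta\mathcal{K}_a$ with $\mathcal{K}_a=\mathcal{K}_0+O(|a|)$, relatively bounded. Since $\mu_n$ grows like $n^2$, $\mathcal{K}_0$ is coercive, $\langle\mathcal{K}_0 v,v\rangle\ge\delta\|v\|_{H^1}^2$, and this coercivity persists for small $|a|$. Then, for an eigenpair $\mathcal{J}_\eta\mathcal{K}_a v=\lambda v$, the identity
\[
\lambda\langle \mathcal{J}_\eta^{-1}v,\mathcal{K}_a v\rangle\cdot\text{(conj.)}
\]
(equivalently, $\mathcal{K}_a$ positive implies $\mathcal{K}_a^{1/2}\mathcal{J}_\eta\mathcal{K}_a^{1/2}$ is skew-adjoint) forces $\operatorname{Re}\lambda=0$. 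Alternatively, I would simply invoke the same standard perturbation argument used for the lemma on finite and short wavelength transverse perturbations in Section 4.

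\textbf{Item (iii).} The modes $n=1$ and $|n|\ge2$ are never candidates for instability, since their Krein signatures are positive by (i). I would then recall the standard Krein theory, taking as given the structure $\mathcal{J}_\eta\mathcal{K}$ stated earlier. Under perturbation of size $O(|a|)$, a purely imaginary eigenvalue of $\mathcal{M}_0(q,\eta)$ can leave the imaginary axis only through a collision of two eigenvalues $i\omega_n$, $i\omega_m$ of opposite Krein signature. Such a pair needs some $\mu_n\le0$, and by (i) this can happen only for $n\in\{0,-1\}$. Hence any instability must involve a collision of $i\omega_0$ or $i\omega_{-1}$ with another eigenvalue, which completes the lemma.
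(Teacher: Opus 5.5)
Your item (i) and the sign analysis behind (ii) match the paper. The paper gives no argument beyond reading the signs off \eqref{munellxi}, \eqref{Knxi} and the thresholds \eqref{ell02}. Your bounds are correct: $|n+\eta|>1$ for $n=1$ and $|n|\ge2$, and $q_0^2\le q_{-1}^2\le 2bw_0^2k^2/(1+k^2)$. As you note, $\operatorname{spec}\mathcal{M}_0(q,\eta)=\{i\omega_n\}$ is imaginary in any case. So the real content of ``the spectrum is purely imaginary'' in (ii), and of (iii), concerns $\mathcal{M}_a(q,\eta)$, and that is where your argument has a genuine gap.

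\textbf{The gap.} Your energy argument and your appeal to Krein's collision principle both need $\mathcal{M}_a(q,\eta)=\mathcal{J}_\eta\mathcal{K}_a$ with $\mathcal{K}_a$ self-adjoint. Because $\mathcal{J}_\eta$ is invertible, $\mathcal{K}_a=\mathcal{J}_\eta^{-1}\mathcal{M}_a(q,\eta)$ is forced. It is not self-adjoint for $a\neq0$ unless $b=4$; the paper's factorization is stated only at $a=0$.

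To see this, write $\partial_\eta=\partial_z+i\eta$ and let $\mathcal{B}$ be the $O(a)$ term from the proof of Theorem~\ref{thm:NIbwd}. Then $\mathcal{J}_\eta^{-1}\mathcal{B}=w_0S_\eta$, where
\[
S_\eta v=2k^2\cos z\,\partial_\eta^2v+(2-b)k^2\sin z\,\partial_\eta v-2(1+b+k^2)\cos z\,v .
\]
This operator is symmetric only if $(2-b)k^2\sin z=(2k^2\cos z)'$, that is, only if $b=4$.

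The paper's own computations confirm this. Its matrix entries give $\langle\mathcal{J}_\eta^{-1}\mathcal{B}e^{-iz},1\rangle=-w_0A/\eta$ and $\langle\mathcal{J}_\eta^{-1}\mathcal{B}\,1,e^{-iz}\rangle=-w_0B/(1-\eta)$. One checks $A/\eta-B/(1-\eta)=k^2(b-4)(\eta-\tfrac12)$. Self-adjointness would force these two entries to agree, hence $AB\ge0$. But Lemma~\ref{lem:AB-full-criterion} produces $AB<0$. That regime is an opposite-signature collision at $q_c^2\in(q_0^2,q_{-1}^2)$ which does not destabilize, and this cannot happen in a genuinely $\mathcal{J}_\eta$-Hamiltonian problem.

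Consequently $\langle\mathcal{K}_av,v\rangle$ need not be real. From $\mathcal{K}_av=\lambda\mathcal{J}_\eta^{-1}v$ and $\langle\mathcal{J}_\eta^{-1}v,v\rangle\in i\mathbb{R}$, positivity of $\operatorname{Re}\langle\mathcal{K}_av,v\rangle$ only constrains $\operatorname{Im}\lambda$, not $\operatorname{Re}\lambda$. For the same reason, the Krein collision theorem cannot be invoked in (iii).

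\textbf{What is available instead.} For $\mathcal{M}_a$, the usable tool is the spectral-symmetry lemma: the spectrum is symmetric about $i\mathbb{R}$, by evenness of $w$. Hence a simple, isolated $i\omega_n$ stays on the axis for small $|a|$. Eigenvalues can therefore leave the axis only near coincidences $\omega_n=\omega_m$. Each such coincidence must be decided by a $2\times2$ reduction, as the paper does in Theorem~\ref{thm:NIbwd}, and not by signatures of $\mathcal{M}_0$, which are only a heuristic here.

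In particular, collisions between positive-signature modes are not ruled out a priori, and they do occur. For example, $\omega_2-\omega_1$ is affine in $q^2$ with positive intercept and negative slope, so it vanishes at some $q^2>0$. Similarly, $\omega_n-\omega_1$ with $n\to\infty$ gives such collisions at arbitrarily large $q^2$.

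So you should drop the energy argument. What you (and the paper) actually prove is the $\mathcal{M}_0$-level content of (i)--(ii). The perturbed half of (ii) and the ``only sources of instability'' clause of (iii) rest, in the paper as in your fallback, on an unproved ``standard perturbation argument''.
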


\begin{lemma}\label{lemKrein0}
Suppose that $\eta\in(0,\frac{1}{2}]$, $b>0$ and $q>0$.
\begin{enumerate}[(i)]
\item All Krein signatures are positive, spectrum remains purely imaginary for  $q^2 > q_{-1}^2(\eta)$.
\item  When $q_0^2(\eta) < q^2 < q_{-1}^2(\eta)$ and  $\eta\neq\frac{1}{2}$, 
only the Krein signature of  $i\omega_{-1}$ is negative. 
The eigenvalue $i\omega_{-1}$ may collide with eigenvalues carrying positive Krein signature on the imaginary axis.
\item When $0< q^2 < q_0^2(\eta),$ only the Krein signatures of  $i\omega_{-1}$ and $i\omega_{0}$ are negative.  
The eigenvalues $i\omega_{-1}$ and $i\omega_{0}$ may lead to instability under perturbations.
\end{enumerate}
 \end{lemma}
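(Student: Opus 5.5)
The lemma is a sign bookkeeping statement for the quantities $\mu_n(q,\eta)$ in (\ref{munellxi}), since by (\ref{Knxi}) $K_{n,\eta}=\operatorname{sgn}(\mu_n)$. The plan is to fix $\eta\in(0,\tfrac12]$, $b>0$ and $q>0$, and first record the sign of $\mu_n$ for every $n\in\mathbb Z$.

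For $n=1$ or $|n|\ge 2$ we have $|n+\eta|>1$, so both terms of $\mu_n$ are positive and $K_{n,\eta}=1$; this is part (i) of the preceding lemma, which we take as given. For $n=0$ and $n=-1$, solving $\mu_n=0$ for $q^2$ gives exactly the thresholds $q_0^2(\eta)$ and $q_{-1}^2(\eta)$ in (\ref{ell02}). Hence
\[
\mu_0>0\iff q^2>q_0^2(\eta),\qquad \mu_{-1}>0\iff q^2>q_{-1}^2(\eta).
\]

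Next I would verify the ordering $q_0^2\le q_{-1}^2$ on $(0,\tfrac12]$. After dividing by $\frac{bw_0^2k^2}{1+k^2}\eta$, this reduces to
\[
\eta(1-\eta)(1+\eta)\le(2-\eta)(1-\eta)^2 .
\]
Dividing further by $1-\eta>0$, it becomes $\eta+\eta^2\le 2-3\eta+\eta^2$, i.e. $\eta\le\tfrac12$. Equality holds exactly at $\eta=\tfrac12$. This splits $q^2>0$ into three regimes.

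\textbf{Regime $q^2>q_{-1}^2$.} Here all $\mu_n>0$, so $\mathcal K_0(q,\eta)$ is positive definite on $L^2(\mathbb T)$. It is bounded below by a positive constant, since $\mu_n\to\infty$ as $|n|\to\infty$. Then $\mathcal M_0(q,\eta)=\mathcal J_\eta\mathcal K_0$ has all eigenvalues of positive Krein signature. By the standard Krein/MacKay argument, collisions of same-signature eigenvalues cannot leave the imaginary axis. Combined with the $O(|a|)$ bound on $\mathcal M_a(q,\eta)-\mathcal M_0(q,\eta)$, the spectrum therefore stays on $i\mathbb R$ for small $|a|$.

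\textbf{Regime $q_0^2<q^2<q_{-1}^2$ with $\eta\ne\tfrac12$.} This interval is nonempty by the strict ordering. Exactly $\mu_{-1}<0$, so only $i\omega_{-1}$ carries negative signature. By Krein theory it is the only eigenvalue that can leave the axis, via collision with an eigenvalue of positive signature.

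\textbf{Regime $q^2<q_0^2$.} Here both $\mu_0$ and $\mu_{-1}$ are negative, so $i\omega_0$ and $i\omega_{-1}$ are the candidate sources of instability.

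The main obstacle is the precise justification of the Krein-signature persistence claim in part (i) for an unbounded operator. I would first try to invoke the same abstract result used for the KP analogue in \cite{Chen2024MM}. If that is not directly applicable, I would argue directly: the spectrum of $\mathcal M_0$ is discrete with gaps growing in $|n|$, so only finitely many eigenvalues can interact under an $O(|a|)$ perturbation. On that finite-dimensional invariant subspace, definiteness of the restricted Hamiltonian form keeps the eigenvalues purely imaginary. Parts (ii) and (iii) are then merely descriptive: they identify which eigenvalues may collide, not that they do.
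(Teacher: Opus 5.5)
Your sign bookkeeping is exactly the paper's proof. The paper's argument consists only of the following steps:
\begin{itemize}
\item read $K_{n,\eta}=\operatorname{sgn}\mu_n$ off \eqref{munellxi};
\item solve $\mu_0=0$ and $\mu_{-1}=0$ for the thresholds \eqref{ell02};
\item assert $q_0^2\le q_{-1}^2$ with equality iff $\eta=\tfrac12$, which the paper leaves as ``one checks'' and you verify correctly;
\item note $(n+\eta)^2>1$ for $n=1$ and $|n|\ge2$.
\end{itemize}
That part of your proposal is correct and complete. This holds provided ``spectrum remains purely imaginary'' in (i) is read as a statement about $\mathcal{M}_0(q,\eta)$, whose spectrum $\{i\omega_n\}$ is trivially imaginary; the lemma's placement in the subsection on the unperturbed operator indicates this reading. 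The paper gives no persistence argument here. The perturbed claim is the separate Lemma~\ref{Stabilitysw}, and it is posed only for $q^2\ge q_{-1}^2+\epsilon_*$.

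The persistence argument you attach to (i) has a genuine gap, for three reasons.

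\emph{The gaps do not grow.} We have $\omega_n(q,\eta)=\frac{bw_0^2}{1+k^2}(n+\eta)+O(|n|^{-1})$, so consecutive gaps tend to a constant. Meanwhile $\mathcal{M}_a-\mathcal{M}_0$ is first order, like $\mathcal{M}_0$; it is only $O(|a|)$ as a map $H^1\to L^2$, and it has size $O(|a||n|)$ on $e^{inz}$. So you cannot isolate finitely many interacting eigenvalues by a gap argument.

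\emph{No Hamiltonian structure is available for $a\neq0$.} Krein/MacKay, or any ``definite Hamiltonian form'' argument, needs $\mathcal{K}_a\mathcal{M}_a$ to be skew-adjoint for some self-adjoint $\mathcal{K}_a$ reducing to $\mathcal{K}_0(q,\eta)$ at $a=0$. The paper factors only $\mathcal{M}_0$, and its own computation rules such a structure out.
\begin{itemize}
\item At the $\omega_0=\omega_{-1}$ collision in the proof of Theorem~\ref{thm:NIbwd}, the off-diagonal coefficients are $\beta=-\frac{iw_0A}{1+k^2\eta^2}$ and $\gamma=\frac{iw_0B}{1+k^2(1-\eta)^2}$.
\item First-order skew-adjointness would force $\mu_0\beta=-\mu_{-1}\bar\gamma$. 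Using $\omega_0=\omega_{-1}$, this is equivalent to $A/\eta=B/(1-\eta)$, hence $AB\ge0$.
\item In fact $A/\eta-B/(1-\eta)=k^2(b-4)(\eta-\tfrac12)$, and $AB<0$ does occur by Lemma~\ref{lem:AB-full-criterion}.
\end{itemize}

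\emph{The smallness of $|a|$ is not uniform in $q$.} Even granting a Hamiltonian structure, definiteness would require $|a|\ll\mu_{-1}$, and $\mu_{-1}\to0$ as $q^2\downarrow q_{-1}^2$.

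So keep the sign analysis and treat the Krein-theoretic remarks in (ii)--(iii) as heuristics. Then either drop the persistence claim in (i) or prove it by a mechanism that does not presuppose a Hamiltonian structure.
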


\begin{lemma}[Stability for short-wavelength transverse perturbations]\label{Stabilitysw}
Assume that $\eta\in(0,\frac{1}{2}]$. For any $\epsilon_*>0$, there exists $a_*>0$ such
 that for all $|a|\le a_*$ and $q^2 \ge q_{-1}^2(\eta)+\epsilon_*$, the spectrum of $\mathcal{M}_a(q,\eta)$ is purely imaginary.
\end{lemma}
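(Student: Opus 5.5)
The plan is to show that for $q^2\ge q_{-1}^2(\eta)+\epsilon_*$ every eigenvalue $i\omega_n(q,\eta)$ of $\mathcal{M}_0(q,\eta)$ has positive Krein signature, with a margin that is uniform in $\eta\in(0,\frac12]$ and in $q$. Positive-definite Hamiltonian perturbation theory then keeps the spectrum on the imaginary axis for $|a|\le a_*$.

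\textbf{Step 1: uniform lower bound on $\mu_n$.} Using (\ref{munellxi}), for $n=1$ and $|n|\ge2$ the bound $\mu_n\ge \frac{bw_0^2k^2}{1+k^2}\big((n+\eta)^2-1\big)$ already exceeds a positive constant times $(n+\eta)^2$, except that for $n=1$ with $\eta$ near $0$ one gets $(1+\eta)^2-1\ge 2\eta$. That case is handled by the term $q^2/(n+\eta)^2\ge q^2/4$, which is bounded below because $q^2\ge\epsilon_*$. For $n=0$ and $n=-1$ we have
\[
\mu_0=\frac{q^2-q_0^2(\eta)}{\eta^2},\qquad \mu_{-1}=\frac{q^2-q_{-1}^2(\eta)}{(1-\eta)^2},
\]
and since $q_0^2\le q_{-1}^2$ both are at least $\epsilon_*$ (indeed $\mu_0\ge 4\epsilon_*$ and $\mu_{-1}\ge\epsilon_*$). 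So there is $\delta=\delta(\epsilon_*,b,k,w_0)>0$ with $\mu_n\ge\delta\,\max\{1,(n+\eta)^2\}$ for all $n$. Equivalently, the self-adjoint operator $\mathcal{K}_0(q,\eta)$ in $\mathcal{M}_0(q,\eta)=\mathcal{J}_\eta\mathcal{K}_0(q,\eta)$ is uniformly positive, bounded below by a multiple of the $H^1$ norm squared.

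\textbf{Step 2: extend the structure to $a\neq0$.} Write $\mathcal{M}_a(q,\eta)=\mathcal{J}_\eta\mathcal{K}_a(q,\eta)$, where $\mathcal{K}_a$ is the symmetric (Hamiltonian) operator coming from the second variation of the conserved quantity associated with (\ref{y1.3}). Its $\eta$-Bloch form is self-adjoint, and the bound $\|\mathcal{M}_a-\mathcal{M}_0\|_{H^1\to L^2}=O(|a|)$ uniformly in $\eta$ should translate into
\[
\|\mathcal{K}_a-\mathcal{K}_0\|_{H^1\to H^{-1}}=O(|a|).
\]
The $q^2$ part is unchanged by $a$. For $q$ large one rescales: dividing by $q^2$, the coercivity from $q^2/(n+\eta)^2$ dominates the $O(|a|)$ perturbation, so the argument is uniform in $q\to\infty$. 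Consequently, for $|a|\le a_*$ the energy form $\langle\mathcal{K}_a v,v\rangle\ge(\delta/2)\|v\|_{H^1}^2$ stays positive definite.

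\textbf{Step 3: conclude.} If $\mathcal{M}_a v=\lambda v$, then $\mathcal{K}_a v=\lambda\mathcal{J}_\eta^{-1}v$. Pairing with $v$ and using the skew-adjointness of $\mathcal{J}_\eta^{-1}$ gives
\[
\operatorname{Re}\lambda\,\langle \mathcal{J}_\eta^{-1}v,v\rangle\in\mathbb{R}\cdot i \quad\text{and}\quad \langle\mathcal{K}_a v,v\rangle>0 .
\]
Hence $\operatorname{Re}\lambda=0$. Alternatively, the spectrum consists of isolated eigenvalues of positive signature, which cannot leave the axis under small Hamiltonian perturbations, and the symmetry lemma excludes isolated off-axis ones.

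\textbf{Main obstacle.} The hardest step is Step 2: identifying $\mathcal{K}_a$ as genuinely self-adjoint for the $b$-Novikov linearization, since the nonlocal structure $(1-k^2\partial_z^2)^{-1}$ and the $b$-dependent terms are not obviously symmetric. I would first try a weighted inner product using the factor $(c-w^2)^{b/2}$ from (\ref{y-2.3}). If that fails, I would fall back on Kato perturbation arguments for a finite block of low modes together with a resolvent estimate for high $|n|$, where $\omega_n$ grows linearly and the gaps exceed $O(|a|)$.
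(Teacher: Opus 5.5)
Your Step 1 is correct and matches what the paper relies on. The paper gives no separate proof of this lemma. It reads it off from the positivity of all Krein signatures $K_{n,\eta}=\operatorname{sgn}\mu_n(q,\eta)$ (Lemma~\ref{lemKrein0}(i)) together with the ``standard perturbation argument'' invoked in Section~4. Your bounds $\mu_0=(q^2-q_0^2)/\eta^2\ge 4\epsilon_*$, $\mu_{-1}=(q^2-q_{-1}^2)/(1-\eta)^2\ge\epsilon_*$, and $\mu_n\ge\delta(n+\eta)^2$ otherwise, make that quantitative.

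\textbf{The gap is Step 2, and it is false as stated, not merely unverified.} For $\eta\neq0$ the operator $\mathcal{J}_\eta$ is invertible, so the factor is forced: $\mathcal{K}_a=\mathcal{J}_\eta^{-1}\mathcal{M}_a(q,\eta)=D^{-1}\mathcal{L}_\eta[w]-q^2D^{-2}$ with $D=\partial_z+i\eta$. Since $D^*=-D$ and $D^{-2}$ is self-adjoint, $\mathcal{K}_a$ is self-adjoint if and only if $\mathcal{L}_\eta[w]D+D\,\mathcal{L}_\eta[w]^*=0$. This identity is the Bloch conjugate of the differential operator $\mathcal{L}[w]\partial_z+\partial_z\mathcal{L}[w]^*$. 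On $e^{inz}$ that conjugate is a polynomial in $n+\eta$, so it vanishes only if every coefficient does. But the zeroth-order coefficient is
$\partial_z\big(\mathcal{L}[w]^*1\big)=3(b-2)k^2(w'w'')'=3(b-2)k^2a^2\cos 2z+O(a^3)$.
This is the same quantity behind the paper's identity $\langle\mathcal{L}[w]v\rangle=3(b-2)k^2\langle w'w''v\rangle$ in Section~4. Hence, for $b\neq2$ and small $a\neq0$, no self-adjoint $\mathcal{K}_a$ with $\mathcal{M}_a(q,\eta)=\mathcal{J}_\eta\mathcal{K}_a$ exists on $L^2(\mathbb{T})$, for any $\eta$. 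The energy identity of Step 3 then has nothing to act on. The paper only exhibits the factorization at $a=0$, so it offers nothing to borrow here. Your weighted-inner-product idea would require you to construct a new skew operator and a coercive symmetric one for the $b$-Novikov linearization. That construction is the entire content of the lemma, not a technical detail.

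\textbf{The fallback does not close the gap either.} Without a Hamiltonian structure, the only thing keeping eigenvalues on $i\mathbb{R}$ is the reflection $\lambda\mapsto-\bar\lambda$ from the spectral symmetry lemma. That pins \emph{simple} isolated eigenvalues to the axis, but it does not by itself exclude off-axis pairs, and it says nothing at double eigenvalues. Double eigenvalues do occur in your region. For each $n\ge1$, $\omega_n-\omega_0=\tilde\Psi_n(\eta)+q^2\tilde\Omega_n(\eta)$ with $\tilde\Psi_n>0>\tilde\Omega_n$, so $\omega_0=\omega_n$ at some $q_n^2$, and $q_n^2\to\infty$. At such collisions, ``both signatures positive'' only prevents splitting off the axis when the perturbation is Hamiltonian.

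There is also a high-frequency problem. $\mathcal{M}_a(q,\eta)-\mathcal{M}_0(q,\eta)$ is first order with $O(|a|)$ coefficients; its principal part is $\big((c-c_0)-(w^2-w_0^2)\big)(\partial_z+i\eta)$. So its size on the $n$-th Fourier mode is $O(|a|\,|n|)$, while $\omega_{n+1}-\omega_n\to bw_0^2/(1+k^2)$ stays bounded. For $|n|$ of order $1/|a|$ and beyond, the perturbation is not small relative to the spectral gaps. Neither a mode-by-mode Kato argument nor a simple resolvent bound applies there, and that is exactly the regime a coercive, self-adjoint $\mathcal{K}_a$ would have handled.
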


Based on the analysis of  Lemma \ref{lemKrein0} and Lemma \ref{Stabilitysw},
 the eigenvalues of $\mathcal{M}_0(q,\eta)$ that may give rise to perturbation-induced instability fall into two distinct categories.
   When $q_0^2 \le q^2 \le q_{-1}^2$, only the eigenvalue $i\omega_{-1}(q,\eta)$ is involved.
    When $0<q^2<q_0^2$, both $i\omega_{-1}(q,\eta)$ and $i\omega_0(q,\eta)$ are present.
Our next aim is to isolate these particular eigenvalues from the remaining discrete spectrum, all of whose elements carry a positive
 Krein signature. This separation can be accomplished as long as no collisions occur between the eigenvalues of
  interest and the rest of the point spectrum. 
 In the following, we  analyze the collisions   between $\omega_{-1}(q,\eta)$ and    $\omega_{n}(q,\eta)$ on the interval $(0, q_{-1}^2).$     
\begin{lemma}\label{lemw0wn}
Suppose that $\eta\in(0,\frac{1}{2}]$, $b>0$ and $q>0$. Define 
\begin{equation}\label{ellc2}
q_c^2
:=
\frac{b w_0^2 k^2 \,\eta^2(\eta-1)^2}{1+k^2}
\frac{3 + k^2(\eta^2 - \eta + 1)}
{  (1+k^2) - 3k^2 \eta (1- \eta) }.
\end{equation}
$q_0^2(\eta)$ and $q_{-1}^2(\eta)$ are given by (\ref{ell02}).
Then the following conclusions hold true.
 \begin{enumerate}[(i)]
\item  When $0 \leq q^2 \leq q_{-1}^2,$  collisions cannot occur between
 $\omega_{-1}(q,\eta)$  and  $\omega_n(q,\eta)$ for   $|n| \geq 2.$
\item  Collisions  occur between
 $\omega_{-1}(q,\eta)$  and  $\omega_0(q,\eta)$ if and only if    $ q^2 = q_c^2.$
\item
 When  $ q_0^2 < q^2  \leq q_{-1}^2,$ collisions cannot occur between
 $\omega_{-1}(q,\eta)$  and  $\omega_1(q,\eta)$ 
\item  If $k^2 > \frac{3}{1-\eta^2},$ then collisions may occur between
 $\omega_{-1}(q,\eta)$  and  $\omega_1(q,\eta)$ for    $0< q^2 < q_0^2.$
\item If $k^2 \leq  \frac{3}{1-\eta^2},$ then collisions cannot occur between
 $\omega_{-1}(q,\eta)$  and  $\omega_1(q,\eta)$ for all   $0< q^2 < q_0^2.$
 \end{enumerate}
 \end{lemma}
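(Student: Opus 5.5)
The plan is to work directly with the explicit dispersion relation (\ref{omegan}). Write $\alpha:=\frac{bw_0^2k^2}{1+k^2}>0$ and $\xi_n:=n+\eta$, so that
\[
\omega_n(q,\eta)=\frac{\alpha\xi_n(\xi_n^2-1)+q^2/\xi_n}{1+k^2\xi_n^2}.
\]
A collision $\omega_{-1}=\omega_n$ is then one algebraic equation in $q^2$, and it is \emph{linear} in $q^2$. Solving it gives, for each $n$, a single candidate value
\[
q^2=Q_n(\eta):=\alpha\,\frac{\dfrac{\xi_n(\xi_n^2-1)}{1+k^2\xi_n^2}-\dfrac{\xi_{-1}(\xi_{-1}^2-1)}{1+k^2\xi_{-1}^2}}{\dfrac{1}{\xi_{-1}(1+k^2\xi_{-1}^2)}-\dfrac{1}{\xi_n(1+k^2\xi_n^2)}},
\]
provided the denominator does not vanish. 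Each item of the lemma then reduces to locating $Q_n(\eta)$ relative to $0$, $q_0^2$ and $q_{-1}^2$ from (\ref{ell02}).

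For (ii), take $n=0$, with $\xi_0=\eta$ and $\xi_{-1}=\eta-1$. Clearing denominators gives a quotient of polynomials in $\eta$. Both numerator and denominator should contain the factor $\xi_0-\xi_{-1}=1$ structure, so after factoring one expects
\[
Q_0=\alpha\,\eta^2(\eta-1)^2\,\frac{3+k^2(\eta^2-\eta+1)}{(1+k^2)-3k^2\eta(1-\eta)},
\]
which is $q_c^2$ in (\ref{ellc2}). Linearity in $q^2$ gives the "if and only if" at once. I would verify the factorization symbolically, using $\xi_0\xi_{-1}=-\eta(1-\eta)$ and $\xi_0^2+\xi_0\xi_{-1}+\xi_{-1}^2=\eta^2-\eta+1$.

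For (i), take $|n|\ge2$. On $0\le q^2\le q_{-1}^2$ we have $\omega_{-1}\le0$ near the origin; more precisely $|\omega_{-1}|\le \alpha\,\eta(2-\eta)(1-\eta)/(1+k^2(1-\eta)^2)$ plus $q^2$ terms, which are bounded by $q_{-1}^2$. Meanwhile $\omega_n$ has $\operatorname{sgn}\omega_n=\operatorname{sgn}\xi_n$, and $|\omega_n|$ is bounded below, because $\alpha|\xi_n|(\xi_n^2-1)/(1+k^2\xi_n^2)$ grows with $|\xi_n|\ge 3/2$. For $n\ge2$ the signs differ, so no collision is possible. For $n\le-2$, I would compare magnitudes: show that $|\omega_n|\ge|\omega_{n=-2}|>\sup_{q^2\le q_{-1}^2}|\omega_{-1}|$. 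Alternatively, and more robustly, show that $Q_n<0$ or $Q_n>q_{-1}^2$ by a sign analysis of the numerator and denominator of $Q_n$.

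For (iii)–(v), take $n=1$, with $\xi_1=1+\eta>0$. For $q^2>q_0^2$ we have $\mu_{-1}<0$ up to $q_{-1}^2$, so $\omega_{-1}\ge0$ there, while $\omega_1>0$. The signs therefore no longer separate the two branches, and the question is exactly whether $Q_1\in(0,q_0^2)$. I would compute $Q_1$ explicitly and compare it with $q_0^2$. The condition $Q_1<q_0^2$ (together with $Q_1>0$) should reduce, after cancelling positive factors, to $k^2(1-\eta^2)>3$. That yields (iv), and its negation yields (v); in the case $Q_1\ge q_0^2$ one also has to confirm $Q_1>q_{-1}^2$ or that it is otherwise excluded, which gives (iii).

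The main obstacle is this last simplification, the sign of $Q_1-q_0^2$. The expected route is to factor out $\eta(1+\eta)$, $(1-\eta)$ and $(1+k^2\xi^2)$ terms. I would first try doing this by direct symbolic expansion. Failing that, I would evaluate at the threshold $k^2=3/(1-\eta^2)$ to locate the factor $k^2(1-\eta^2)-3$.
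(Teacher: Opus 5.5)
Your treatment of (ii)--(v) follows the paper's route. Each collision condition is linear in $q^2$, so everything reduces to locating its single root: $Q_0=q_c^2$, and $Q_1$ is the paper's $q_d^2$. Part (i), however, has a genuine gap caused by a sign error.

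On $0\le q^2\le q_{-1}^2$ one has $\mu_{-1}\le 0$ and $\xi_{-1}=\eta-1<0$, hence $\omega_{-1}\ge 0$, not $\le 0$. Already at $q=0$, $\omega_{-1}=\alpha\,\eta(1-\eta)(2-\eta)/(1+k^2(1-\eta)^2)>0$, and you use the correct sign yourself in the $n=1$ paragraph. So your two cases are reversed:
\begin{itemize}
\item For $n\le -2$ the signs really do differ, $\omega_n<0\le\omega_{-1}$, and that case is immediate. This is exactly how the paper treats it.
\item For $n\ge 2$ both $\omega_n$ and $\omega_{-1}$ are nonnegative, so ``the signs differ'' is false. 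This case is the only one in (i) that needs an actual estimate, and it is left unproved.
\end{itemize}
Moreover, the magnitude bound $|\omega_{-2}|>\sup|\omega_{-1}|$ you propose for $n\le-2$ is false in general: at $q=0$, $\eta=\tfrac12$, $k^2=100$ it would assert $1.875/226>0.375/26$. Fortunately it is not needed.

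The paper closes $n\ge 2$ by writing $\omega_n-\omega_{-1}=\Psi_n+\Omega_n q^2$ with $\Omega_n>0$ and $\Psi_n=\alpha\,[h(n+\eta)+h(1-\eta)]$, where $h(x)=x(x^2-1)/(1+k^2x^2)$. Since $h$ is increasing on $[1/\sqrt3,\infty)$, it suffices that $h(2+\eta)>-h(1-\eta)$, which holds for all $k>0$ and $\eta\in(0,\tfrac12]$.

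Your own $Q_n$ also does the job once you factor it. With $\xi=\xi_n$, $\zeta=\xi_{-1}$, the common factor $\xi-\zeta$ cancels and
\[
Q_n=\alpha\,\xi\zeta\,\frac{\xi^2+\xi\zeta+\zeta^2-1+k^2\xi\zeta(1+\xi\zeta)}{1+k^2(\xi^2+\xi\zeta+\zeta^2)}.
\]
This form also shows the $q^2$-coefficient never vanishes, which disposes of your proviso. For $n\ge 2$:
\begin{itemize}
\item $\xi\zeta\le-(2+\eta)(1-\eta)\le-\tfrac54$, so $\xi\zeta(1+\xi\zeta)>0$;
\item $\xi^2+\xi\zeta+\zeta^2-1\ge 2+3\eta+3\eta^2$.
\end{itemize}
Hence the bracket is positive, $Q_n<0$, and no collision occurs for any $q$.

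The same formula gives $q_c^2$ at $n=0$. Note that $\xi_0^2+\xi_0\xi_{-1}+\xi_{-1}^2=3\eta^2-3\eta+1$, which yields the denominator, while $\eta^2-\eta+1=1+\xi_0\xi_{-1}$; your stated identity is off. At $n=1$ it gives
\[
Q_1=q_0^2\,\frac{k^2(1-\eta^2)-3}{1+k^2(1+3\eta^2)}.
\]
Thus $Q_1<q_0^2$ unconditionally, so (iii) is automatic. The threshold in (iv)--(v) therefore comes from the sign of $Q_1$, not of $Q_1-q_0^2$ as you anticipated. For $k^2(1-\eta^2)>3$ the collision in fact occurs, at $q^2=Q_1\in(0,q_0^2)$.
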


\begin{proof}

For \( \eta \in (0, \frac{1}{2}] \) and $q>0$, note that the function 
 \( f(x) = \frac{x}{1 + k^2 x^2} (  \frac{b w_0^2 k^2 (x^2 -1)}{1+k^2} + \frac{q^2}{x^2}) \) 
  is odd. Since  $f(x)>0$ for $x \geq 1$,  we have
$ \omega_{n}(q,\eta) < 0 $ for $n \leq -2$, and   $ \omega_n(q,\eta) >0 $  for $n \geq 1$.

Since \[
\omega_{-1}(q,\eta)
= \frac{\eta-1}{1+k^2(\eta-1)^2}
\left(
b w_0^2 \cdot \frac{k^2\big((\eta-1)^2-1\big)}{1+k^2}
+ \frac{q^2}{(\eta-1)^2}
\right),
\]
setting $\omega_{-1}(q,\eta)=0,$ one has $q^2(\eta)=q_{-1}^2(\eta)=\frac{b w_0^2 k^2}{1+k^2}\,\eta(2-\eta)(1-\eta)^2.$
 When $q^2(\eta)<q_{-1}^2(\eta),$  $\omega_{-1}(q,\eta)>0.$ Note that $ \omega_{n}(q,\eta) < 0 $ for $n \leq -2$.
 Thus, when $n\leq -2,$
the collisions cannot occur between
 $\omega_{-1}(q,\eta)$  and  $\omega_n(q,\eta).$  

Consider the difference function
\[
\Phi_n(q^2,\eta):=\omega_n(q,\eta) - \omega_{-1}(q,\eta)= \Psi_n(\eta) + \Omega_n(\eta)\,q^2,
\]
where
\[
\Psi_n(\eta)= \frac{b\,w_0^2\,k^2}{1+k^2}\left[
\frac{(n+\eta)((n+\eta)^2-1)}{1+k^2(n+\eta)^2}
+
\frac{(1-\eta)((1-\eta)^2-1)}{1+k^2(1-\eta)^2}
\right],
\]
\[
\Omega_n(\eta)= \frac{1}{(n+\eta)(1+k^2(n+\eta)^2)}
+
\frac{1}{(1-\eta)(1+k^2(1-\eta)^2)}.
\]
 Since $f(x)=\frac{(x+\eta)((x+\eta)^2-1)}{1+k^2(x+\eta)^2}
+ \frac{(1-\eta)((1-\eta)^2-1)}{1+k^2(1-\eta)^2}$ 
is increasing for $|x+\eta|>\frac{\sqrt{3}}{3},$ which implies  that   $G_n(\eta)$ is increasing   with respect to $n$ for all  $n\geq 2$.
Note that $\Omega_n(\eta)>0$ for $n\geq2.$ Since $f(2)>0$, we have that $\Phi_n(q^2,\eta)>0$.  Thus, collisions do not occur between
 $\omega_{-1}(q,\eta)$  and  $\omega_n(q,\eta)$ for   $n\geq2.$

When $n=0$, then
\[
\Phi_0(q^2,\eta)=\omega_0(q,\eta) - \omega_{-1}(q,\eta)= \Psi_0(\eta) + \Omega_0(\eta)\,q^2.
\]
Obviously, $\Phi_0(q^2,\eta)$ is a linear and monotonic function in $q^2$, $\Phi_0(q_0^2,\eta)<0$ and 
 $\Phi_0(q_{-1}^2,\eta)>0$ for  $0<\eta<\frac{1}{2}$.
Thus, there exists a unique $q_c^2 \in (q_0^2, q_{-1}^2)$ such that $\Phi_0(q_{c}^2,\eta)=0.$
By computation, we have 
\begin{equation*}
q_c^2
=
\frac{b w_0^2 k^2 \,\eta^2(\eta-1)^2}{(1+k^2)}
\frac{3 + k^2(\eta^2 - \eta + 1)}
{  (1+k^2) - 3k^2 \eta (1- \eta) }.
\end{equation*}
Thus, collisions  occur between
 $\omega_{-1}(q,\eta)$  and  $\omega_0(q,\eta)$ if and only if    $ q^2 = q_c^2.$

When $n=1$, 
\[
\Psi_1(\eta)=
\frac{2b\,w_0^2\,k^2}{(1+k^2)}\,
\frac{\eta^2\bigl[3+k^2(\eta^2-1)\bigr]}
{\bigl[1+k^2(1+\eta)^2\bigr]\bigl[1+k^2(\eta-1)^2\bigr]}.
\] 
Obviously, $\Psi_1(\eta)<0$  if and only if $k^2>\frac{3}{1-\eta^2}.$ Note that $\Omega_1(\eta)>0.$ When $k^2>\frac{3}{1-\eta^2},$ 
there exists a unique $q_d^2$ such 
that $\Phi_1(q_d^2,\eta)=0,$ where 
\begin{equation}\label{elld}
  q_d^2=\frac{b\,w_0^2\,k^2}{(1+k^2)}\,
\frac{\eta^2(\eta^2-1)\bigl(3+k^2\eta^2-k^2\bigr)}
{1+k^2+3k^2\eta^2}.
\end{equation}
Direct calculation yields  $ q_d^2<q_0^2.$ Thus, the proof of the lemma is complete. 

\end{proof}

Next, we  analyze the collisions   between $\omega_{0}(q,\eta)$ and    $\omega_{n}(q,\eta).$  
When \(q_0^2(\eta) \le q^2 < \ell_{-1}^2(\eta)\), the Krein signature associated with the eigenvalue \(i\omega_{0}\) is positive. Consequently, \(i\omega_{0}\) cannot lead to instability under small perturbations. We therefore restrict our subsequent analysis to the regime 
 \(0 < q^2 < q_{0}^2(\eta)\).

\begin{lemma}
Assume \(\eta \in(0, \tfrac{1}{2}]\) and \(0<q^{2}<q_{0}^{2}\).
If \(k^{2} \leq 4\), then \(\omega_{0}(q, \eta) \neq \omega_{n}(q, \eta)\) for all  \(n \neq 0\).
Meanwhile, if \(k^{2} \leq3\), then \(\omega_{-1}(q, \eta) \neq\omega_{n}(q, \eta)\) for all  \(n \neq-1\).
\end{lemma}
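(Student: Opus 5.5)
The plan is to work with the single odd function
\[
f(x)=\frac{x}{1+k^2x^2}\Big(A(x^2-1)+\frac{q^2}{x^2}\Big),\qquad A:=\frac{b w_0^2k^2}{1+k^2}>0,
\]
so that $\omega_n(q,\eta)=f(n+\eta)$ by (\ref{omegan}). The idea is to separate most pairs of eigenvalues by sign, and to handle the remaining pairs by comparing absolute values.

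\textbf{Signs.} Fix $\eta\in(0,\frac12]$ and $0<q^2<q_0^2(\eta)\le q_{-1}^2(\eta)$. Then $\mu_0<0$, and since $\eta>0$ this gives $\omega_0<0$. Likewise $\mu_{-1}<0$ with $n+\eta=\eta-1<0$, so $\omega_{-1}>0$. From the proof of Lemma \ref{lemw0wn}, $\omega_n>0$ for $n\ge1$ and $\omega_n<0$ for $n\le-2$. Consequently $\omega_0$ can only coincide with some $\omega_n$ with $n\le-2$, and $\omega_{-1}$ can only coincide with some $\omega_n$ with $n\ge1$.

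\textbf{Second claim ($k^2\le 3$).} This follows directly from Lemma \ref{lemw0wn}. Part (i) excludes collisions with $\omega_n$ for $|n|\ge2$ whenever $q^2\le q_{-1}^2$. Part (v) excludes collisions with $\omega_1$ for $0<q^2<q_0^2$ provided $k^2\le \frac{3}{1-\eta^2}$, and this holds because $k^2\le3\le\frac{3}{1-\eta^2}$.

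\textbf{First claim ($k^2\le4$).} For $n\le-2$ put $x=|n|-\eta\ge\frac32$. By oddness, $|\omega_n|=f(x)$ and $|\omega_0|=-f(\eta)$. It therefore suffices to show $f(|n|-\eta)>-f(\eta)$. Two estimates give this:
\[
-f(\eta)=\frac{\eta}{1+k^2\eta^2}\Big(A(1-\eta^2)-\frac{q^2}{\eta^2}\Big)<A\,h(\eta),\qquad h(\eta):=\frac{\eta(1-\eta^2)}{1+k^2\eta^2},
\]
which is strict because $q>0$, and
\[
f(x)\ge A\,g(x),\qquad g(x):=\frac{x(x^2-1)}{1+k^2x^2}.
\]
Next, $g$ is increasing for $x\ge1$, since the numerator of $g'$ is $k^2x^4+(3+k^2)x^2-1>0$. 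Hence it is enough to treat $n=-2$, that is, to prove $g(2-\eta)\ge h(\eta)$. After cancelling the common factor $1-\eta>0$, this becomes
\[
P(\eta):=(2-\eta)(3-\eta)(1+k^2\eta^2)-\eta(1+\eta)\big(1+k^2(2-\eta)^2\big)\ge0 .
\]
Since $P$ is linear in $k^2$, a short expansion gives
\[
P(\eta)=6(1-\eta)-2k^2\eta(2-\eta)(1-\eta)=2(1-\eta)\big(3-k^2\eta(2-\eta)\big).
\]
Because $\eta(2-\eta)\le\frac34$ on $(0,\frac12]$, we get $P\ge0$ whenever $k^2\le4$. Combining the strict inequality for $-f(\eta)$ with $P\ge0$ yields $|\omega_n|>|\omega_0|$ for every $n\le-2$, which proves the first claim.

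\textbf{Main obstacle.} The delicate step is this $n=-2$ comparison. A crude bound, such as $h\le\frac38$ together with $g\ge g(\frac32)$, only gives the result for $k^2<16/9$. To reach the sharp constant $4$, one must compare at the same $\eta$ and factor $P$ exactly; equality then occurs precisely at $\eta=\frac12$, $k^2=4$. At that borderline point, the strictness coming from $q^2>0$ is what still excludes a collision.
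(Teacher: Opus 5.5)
Your proof is correct and follows essentially the paper's route: separating by sign the pairs whose frequencies have opposite signs, using monotonicity in $n$ to reduce $n\le -2$ to $n=-2$, obtaining the factor $3-k^2\eta(2-\eta)$ with $\eta(2-\eta)\le\frac34$, and citing Lemma~\ref{lemw0wn} for the $\omega_{-1}$ claim. The differences are only streamlinings. You discard the $q^2$ contributions by sign before the monotonicity step, so you need monotonicity only of the $q$-independent function $g$, whereas the paper controls the $q$-dependent function using $q^2<q_0^2$. You also separate $\omega_0$ from $\omega_{-1}$ directly via $\omega_0<0<\omega_{-1}$ instead of appealing to $q_c^2>q_0^2$.
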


\begin{proof}
Since
\[
\omega_0(q,\eta)
= \frac{\eta}{1+k^2 \eta^2}
\left(
b w_0^2 \cdot \frac{k^2\big(\eta^2-1\big)}{1+k^2}
+ \frac{q^2}{\eta^2}
\right),
\]
$\omega_0(q,\eta)$  is a linear and monotonic function in $q^2$, 
\(\omega_0(q,\eta)\big|_{q^2=0}<0\), 
 $\omega_0(q,\eta)>0$ for sufficiently large $q^2$.
Thus, there exists a unique $q_0^2$ such that $\omega_0(q_0^2,\eta)=0.$
Solving this zero condition gives $q_{0}^2(\eta)=\frac{b w_0^2 k^2}{1+k^2} \eta^2 (1-\eta^2).$
As a result, \(\omega_0(q,\eta)>0\) for \(q^2>q_0^2(\eta)\) and \(\omega_0(q,\eta)<0\) for \(q^2<q_0^2(\eta)\).

 To detect potential eigenvalue collisions between \(\omega_0\) and  \(\omega_n\), we define the spectral difference function
\begin{align*}
\tilde{\Phi}_{n}\left(q^{2}, \eta\right) =  \omega_{n}(q, \eta)-\omega_{0}(q, \eta)
=   \tilde{\Psi}_{n}(\eta)+q^{2} \tilde{\Omega}_{n}(\eta),
\end{align*}
where
\begin{equation}
 \tilde{\Psi}_{n}(\eta)=  \frac{b w_0^2  k^{2}}{1+k^{2}}\left[\frac{\eta\left(1-\eta^{2}\right)}{1+k^{2} \eta^{2}}+\frac{(n+\eta)\left[(n+\eta)^{2}-1\right]}{1+k^{2}(n+\eta)^{2}}\right], 
\end{equation}
\begin{equation}
\tilde{\Omega}_{n}(\eta) = -\frac{1}{\eta\left(1+k^{2} \eta^{2}\right)}+\frac{1}{(n+\eta)\left[1+k^{2}(n+\eta)^{2}\right]}. 
\end{equation}
Note that  $\tilde{\Phi}_{n}(q^{2}, \eta)$ can be rewritten as 
\begin{equation}
\tilde{\Phi}_{n}(q ^{2},\eta )=\frac {q_{0}^{2}-q ^{2}}{\eta \left( 1+k^{2}\eta ^{2}\right) }+ 
\frac {\alpha (n+\eta )^{2}\left[ (n+\eta )^{2}-1\right] +q ^{2}}{(n+\eta )\left[ 1+k^{2}(n+\eta )^{2}\right] }, \tag{5.8}
\end{equation}
where $\alpha=\frac{b w_0^2 k^{2}}{1+k^{2}}$. Since
$ \frac {q _{0}^{2}-q ^{2}}{\eta \left( 1+k^{2}\eta ^{2}\right) } >0 $ for \(0 \le q^2 < q_{0}^2\), 
$\frac {\alpha (n+\eta )^{2}\left[ (n+\eta )^{2}-1\right] +q ^{2}}{(n+\eta )\left[ 1+k^{2}(n+\eta )^{2}\right] } >0 $ for $n \geq1$,
  we only need to analyze  the case with $n \leqslant-2$. Consider  
\[
g(x)=\frac{q^{2}+\alpha x^{2}\left(x^{2}-1\right)}{x\left(1+k^{2} x^{2}\right)}, \quad  x \leq-\frac{3}{2}.
\]
Then by using
$
q^{2}<q_{0}^{2}=\alpha \eta^{2}\left(1-\eta^{2}\right),~ \eta^{2}\left(1-\eta^{2}\right)<\frac{1}{4},
$
we obtain 
\begin{align*}
g'(x) & =\frac{\alpha\left(k^{2} x^{6}+k^{2} x^{4}+3 x^{4}-x^{2}\right)-q^{2}\left(3 k^{2} x^{2}+1\right)}{x^{2}\left(1+k^{2} x^{2}\right)^{2}} \\
& \geq \frac{\alpha\left[k^{2} x^{6}+k^{2} x^{4}+3 x^{4}-x^{2}-\eta^{2}\left(1-\eta^{2}\right)\left(3 k^{2} x^{2}+1\right)\right]}{x^{2}\left(1+k^{2} x^{2}\right)^{2}}>0 
\end{align*}
 for all $ x \leq-\frac{3}{2}.$
Thus,  $\frac {\alpha (n+\eta )^{2}\left[ (n+\eta )^{2}-1\right] +q ^{2}}{(n+\eta )\left[ 1+k^{2}(n+\eta )^{2}\right] }$ is
 increasing with $n$ for $n \leqslant-2$, which yields $\tilde{\Phi}_{n}(q^{2}, \eta) \leq\tilde{\Phi}_{-2}(q^{2}, \eta)$ for $n \leqslant-2$.
Note that $\tilde{\Omega}_{n}(\eta)<0$ for $n \leqslant-2$. Direct computation yields  
\[
\tilde{\Psi}_{-2}(\eta)=\frac{2 \alpha(1-\eta)^{2}\left[k^{2} \eta(2-\eta)-3\right]}{\left(1+k^{2} \eta^{2}\right)\left[1+k^{2}(\eta-2)^{2}\right]}
\]
and
\[
\frac {1}{2}\tilde {\Phi}_{-2}(q ^{2},\eta )=\frac {\left( 1+k^{2}\left[ 1+3(1-\eta )^{2}\right] \right) q ^{2}-\left[ k^{2}\eta (2-\eta )-3\right]
 q_{-1}^{2}}{\eta (\eta -2)(1+k^{2}\eta ^{2})\left[ 1+k^{2}(\eta -2)^{2}\right] }.
\]
If \(k^{2} \leq \dfrac{3}{\eta(2-\eta)}\), then \(\tilde{\Phi}_{-2}(q^{2}, \eta)<0\),
 it follows that \(\tilde{\Phi}_{n}(q^{2}, \eta)<0\) holds for all  \(n \leq -2\).
 This excludes collisions between \(\omega_0\) and all modes with \(n\neq 0,-1\).

For \(\eta\in(0,\frac12]\), one verifies \(\frac{3}{\eta(2-\eta)}\ge 4\). Thus the condition \(k^2\le 4\) guarantees \(k^{2} \leq \frac{3}{\eta(2-\eta)}\) uniformly in \(\eta\), so that \(\omega_0\) never collides with any other eigenvalue. In addition, \(\frac{3}{1-\eta^2}>3\) over the considered \(\eta\)-interval. From Lemma \ref{lemw0wn}, collisions involving \(\omega_{-1}\) can only take place for \(k^2>\frac{3}{1-\eta^2}\). 
Consequently, \(k^2\le 3\) ensures that \(\omega_{-1}\) has no collision with any other mode for \(0<q^2<q_0^2\). 
Finally, the unique crossing between \(\omega_0\) and \(\omega_{-1}\) occurs at \(q_c^2\ge q_0^2\), 
which lies outside the present parameter regime. The proof is  complete.

\end{proof}

\subsection{ Spectrum of the perturbed operator $\mathcal{M}_a(q,\eta)$ for  $q_0^2(\eta) < q^2 < q_{-1}^2(\eta) + \epsilon_*$}

Lemma \ref{lemKrein0} indicates that multiple eigenvalue collisions may emerge for \(0<q^2<q_0^2(\eta)\), 
which substantially complicates the analysis of the perturbed system. Therefore
 we  restrict our discussion to the parameter range \(q_0^2(\eta) < q^2 < q_{-1}^2(\eta) + \epsilon_*\).

\begin{theorem}[Transverse instability under finite-wavelength transverse perturbation]\label{thm:NIbwd}
  Assume that $\eta \in \big(0,\frac12\big]$, $q_0^2(\eta)$  and  $q_c^2(\eta)$ are represented by (\ref{ell02}) and  (\ref{ellc2}), 
  respectively. Define
   \begin{equation}\label{Arep}
A := (1+b)\eta + k^2 + \frac{bk^2}{2}\eta(\eta-1) + k^2(\eta-1)^3,
\end{equation}
   \begin{equation}\label{Brep}
B :=\frac{ (1-\eta)\Big( 2(1+b) + 2k^2(1+\eta+\eta^2) - bk^2\eta\Big )}{2},
\end{equation}
  When $AB > 0$, for any $|a| \ll 1$,  there exist $\epsilon_{\text{crit}}(\eta)>0$ with 
\begin{equation}
\epsilon_{\text{crit}}(\eta) :=
\frac{2|a|\,|w_0|\, \eta(1-\eta)\,
\sqrt{AB\,(1+k^2\eta^2)\bigl(1+k^2(\eta-1)^2\bigr)}}
{1 + k^2(3\eta^2-3\eta+1)}
\end{equation} 
such that
  \begin{enumerate}[(i)]
            
\item if $|q^2 - q_c^2| > \epsilon_{\text{crit}}(\eta) $, then the entire spectrum of $\mathcal{M}_a(q,\eta)$ lies purely on the imaginary axis.

\item if $|q^2 - q_c^2| < \epsilon_{\text{crit}}(\eta) $, then $\mathcal{M}_a(q,\eta)$ admits a pair of complex eigenvalues
 with nonzero real parts of opposite signs, while all remaining spectral points are purely imaginary. 
\end{enumerate}
In the alternative case where $AB < 0$, the spectrum of $\mathcal{M}_a(q,\eta)$ is entirely purely imaginary for
 all $q^2$ satisfying $q_0^2 \le q^2 \le q_c^2 + \epsilon_*$.
\end{theorem}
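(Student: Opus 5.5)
We would follow the standard Krein-collision reduction used by Haragus and Chen et al. for KP-type problems. In the window $q_0^2(\eta)<q^2<q_{-1}^2(\eta)+\epsilon_*$, Lemma \ref{lemKrein0} says that $i\omega_{-1}(q,\eta)$ is the only eigenvalue of $\mathcal{M}_0(q,\eta)$ with negative Krein signature. Lemma \ref{lemw0wn} says that its only possible collision partner in this window is $i\omega_0(q,\eta)$, and that they meet exactly at $q^2=q_c^2$.

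\textbf{Step 1: easy regime.} Away from $q_c^2$, all eigenvalues are simple and purely imaginary, and they stay separated by a gap of order one. By the $O(|a|)$ estimate for $\mathcal{M}_a(q,\eta)-\mathcal{M}_0(q,\eta)$ and the reversibility symmetry (Lemma on spectral symmetry), simple imaginary eigenvalues cannot leave the imaginary axis. The same applies near $q_{-1}^2$, where only $\omega_{-1}$ crosses zero and no collision occurs. This gives purely imaginary spectrum whenever $|q^2-q_c^2|\ge C|a|$.

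\textbf{Step 2: reduction near $q_c^2$.} For $|q^2-q_c^2|$ small, we isolate the two-dimensional spectral subspace of $\mathcal{M}_a(q,\eta)$ near $i\omega_0\approx i\omega_{-1}$ using Riesz projections, as in Section~4. At $a=0$ this subspace is spanned by $e^{0\cdot iz}=1$ and $e^{-iz}$.

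We then compute the $2\times2$ matrix in a basis $\{\xi_0,\xi_{-1}\}$ that is a perturbation of $\{1,e^{-iz}\}$. To first order in $a$, the diagonal entries are $i\omega_0(q,\eta)+O(a^2)$ and $i\omega_{-1}(q,\eta)+O(a^2)$; the $O(a)$ diagonal corrections vanish because $w-w_0=a\cos z+O(a^2)$ only couples Fourier modes differing by $\pm1$. The off-diagonal entries come from the $a\cos z$ part of $\mathcal{L}_\eta[w]-\mathcal{L}_\eta[w_0]$, together with the $O(a^2)$ part of $c$, and are computed via $\langle(\cdot)e^{inz},e^{imz}\rangle$.

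Inserting $w=w_0+a\cos z$ into $\mathcal{L}_\eta$ and extracting the $e^{-iz}\to e^{0}$ and $e^{0}\to e^{-iz}$ coefficients, we expect
\[
m_{0,-1}=i\,a\,w_0\,\frac{2\eta\,A}{1+k^2\eta^2},\qquad m_{-1,0}=i\,a\,w_0\,\frac{2(1-\eta)\,B'}{1+k^2(\eta-1)^2},
\]
where $A$ and $B'\propto B$ are the symbols in (\ref{Arep})--(\ref{Brep}). These come from the terms $(1+b)w^2$, $bk^2ww_{zz}$, $bk^2ww_z\partial$, and $k^2w^2\partial^3$ evaluated at wavenumbers $\eta$ and $\eta-1$.

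\textbf{Step 3: the characteristic equation.} Writing $\lambda=i\mu$, the characteristic polynomial becomes
\[
(\mu-\omega_0)(\mu-\omega_{-1}) = -\,m_{0,-1}m_{-1,0} + O(a^3).
\]
Its discriminant is
\[
(\omega_0-\omega_{-1})^2 - 4\,\kappa\, a^2 w_0^2\, AB\,\eta(1-\eta)\big/\big[(1+k^2\eta^2)(1+k^2(\eta-1)^2)\big].
\]
We use $\omega_0-\omega_{-1}=\Phi_0$, which is linear in $q^2$ with slope
\[
\Omega_0=\frac{1+k^2(3\eta^2-3\eta+1)}{\eta(1-\eta)(1+k^2\eta^2)(1+k^2(1-\eta)^2)}.
\]
If $AB>0$, the discriminant is negative exactly when $|q^2-q_c^2|<\epsilon_{\text{crit}}(\eta)$, with the stated formula. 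This gives eigenvalues with nonzero real parts of opposite sign, consistent with the symmetry $\lambda\mapsto-\bar\lambda$. If $AB<0$, the discriminant is always nonnegative and the spectrum stays imaginary. Together with Step~1 and Lemma \ref{Stabilitysw}, this proves both cases.

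\textbf{Main obstacle.} The hardest part is the explicit, sign-correct computation of the off-diagonal coefficients, including the factors $(1-k^2(\partial_z+i\eta)^2)^{-1}$ and $(\partial_z+i\eta)^{-1}$, so that their product is exactly proportional to $AB$. A second difficulty is checking that the $O(a^2)$ diagonal shifts only translate $q_c^2$ by $O(a^2)$, which is negligible relative to $\epsilon_{\text{crit}}=O(|a|)$. We would do the Fourier bookkeeping by hand first: compute $\mathcal{L}_\eta[w]e^{i\ell z}$ for the $e^{\pm iz}$ components of $w$, $w^2$, and the derivative terms. Only after that would we match the result against (\ref{Arep})--(\ref{Brep}).
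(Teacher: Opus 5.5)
Your architecture matches the paper's proof. You isolate the two-dimensional spectral subspace at the $i\omega_0$--$i\omega_{-1}$ collision, write $\mathcal{M}_a(q,\eta)=\mathcal{M}_0(q,\eta)+a\mathcal{B}+O(a^2)$, and note that $a\cos z$ only couples neighbouring Fourier modes, so the diagonal is unchanged at $O(a)$. You then read off the answer from the discriminant in $\epsilon=q^2-q_c^2$, and your slope $\Omega_0$ is correct.

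The gap is that the one quantity carrying the whole theorem, the sign and size of $m_{0,-1}m_{-1,0}$, is never determined, and your placeholder points the wrong way. Both of your displayed entries carry a factor $+i$, so $m_{0,-1}m_{-1,0}=-4\eta(1-\eta)a^2w_0^2AB'/[(1+k^2\eta^2)(1+k^2(1-\eta)^2)]$. By your own relation $(\mu-\omega_0)(\mu-\omega_{-1})=-m_{0,-1}m_{-1,0}$, the discriminant is $(\omega_0-\omega_{-1})^2-4m_{0,-1}m_{-1,0}$, which is \emph{positive} whenever $AB'>0$. With $B'$ a positive multiple of $B$, this predicts stability for $AB>0$, the opposite of (ii). The free constant $\kappa$ then silently absorbs a sign and an $\eta$-dependent factor. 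The stated $\epsilon_{\mathrm{crit}}(\eta)$ corresponds exactly to $\kappa=1/(\eta(1-\eta))$, i.e.\ to $m_{0,-1}m_{-1,0}=+a^2w_0^2AB/[(1+k^2\eta^2)(1+k^2(1-\eta)^2)]$.

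Nor can a Krein argument supply this sign. Suppose $\mathcal{M}_a=\mathcal{J}_\eta\mathcal{K}_a$ with $\mathcal{K}_a$ self-adjoint, and let $\beta$ be the $(0,-1)$ element of the first-order part of $\mathcal{K}_a$. Then the product would be $a^2\eta(1-\eta)|\beta|^2/[(1+k^2\eta^2)(1+k^2(1-\eta)^2)]\ge 0$. Yet $AB<0$ genuinely occurs (for $b>6$, large $k^2$ and small $\eta$), so the sign must be computed.

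That computation is short, and it is exactly what the paper does. With $D_\eta:=\partial_z+i\eta$, the $O(a)$ part of $\mathcal{L}_\eta[w]$ gives
\[
\mathcal{B}=w_0\bigl(1-k^2D_\eta^2\bigr)^{-1}\bigl[2(1+b+k^2)\sin z-(2(1+b)+bk^2)\cos z\,D_\eta-bk^2\sin z\,D_\eta^2+2k^2\cos z\,D_\eta^3\bigr].
\]
The multiplicative piece $2(1+b+k^2)\sin z$ comes from $-2(1+b)ww_z+2k^2ww_{zzz}$, which your list of contributing terms omits. Without it, the part $(1+b)\eta+k^2=(1+b+k^2)+(1+b)(\eta-1)$ of $A$ does not appear.

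Using $D_\eta e^{-iz}=i(\eta-1)e^{-iz}$ and $D_\eta 1=i\eta$, the constant mode of $\mathcal{B}e^{-iz}$ is $-iw_0A/(1+k^2\eta^2)$. The $e^{-iz}$ mode of $\mathcal{B}1$ is $iw_0B/(1+k^2(1-\eta)^2)$. So your $A$-entry has the wrong sign, and there are no factors $2\eta$ or $2(1-\eta)$.

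Such factors cannot come from $(\partial_z+i\eta)^{-1}$, which enters only through the $q^2$ term; that term is a Fourier multiplier and hence diagonal. Similarly, the $O(a^2)$ correction to $c$ multiplies a constant-coefficient operator, so it shifts only the diagonal, not the off-diagonal entries as you state.

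The product is then $a^2w_0^2AB/[(1+k^2\eta^2)(1+k^2(1-\eta)^2)]$. Solving $\Omega_0^2\epsilon^2=4a^2w_0^2AB/[(1+k^2\eta^2)(1+k^2(1-\eta)^2)]$ gives precisely $\epsilon_{\mathrm{crit}}(\eta)$ when $AB>0$, while for $AB<0$ the discriminant stays positive.
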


\begin{proof}

For sufficiently small $|a|,\epsilon_*>0$, the spectrum of $\mathcal{M}_a(q,\eta)$ decomposes as
\[
\mathrm{spec}\big(\mathcal{M}_a(q,\eta)\big)
= \mathrm{spec}_0\big(\mathcal{M}_a(q,\eta)\big) \cup \mathrm{spec}_1\big(\mathcal{M}_a(q,\eta)\big),
\]
where $\mathrm{spec}_0$ is a two-dimensional spectral subspace obtained by analytically continuing the unperturbed modes
  $i\omega_0(q,\eta)$ and  $i\omega_{-1}(q,\eta)$ under small perturbations in the parameter $a$,
and $\mathrm{spec}_1$ consists of purely imaginary eigenvalues uniformly separated from $\mathrm{spec}_0$.
By selecting sufficiently small positive constants \(\epsilon_*\) and \(a_*\), 
the above spectral decomposition remains valid for all parameters satisfying \(q_0^2 \le q^2 \le q_c^2+\epsilon_*\) and \(|a|\le a_*\).
 This reduces our remaining task to examining the spatial distribution of the two eigenvalues belonging to \(\mathrm{spec}_0(\mathcal{M}_a(q,\eta))\).

Lemma \ref{lemw0wn} establishes that for \(q_0^2 \le q^2 \le q_c^2+\epsilon_*\)
 with \(q\) outside a small neighbourhood of \(q_c\), the two modes \(i\omega_{-1}(q,\eta)\) 
 and \(i\omega_0(q,\eta)\) are simple. Moreover, there exists a constant \(\hat{c}>0\) such that
\(\big|i\omega_{-1}(q,\eta)-i\omega_0(q,\eta)\big|\ge \hat{c}.\)
When a is taken sufficiently close to zero, the simplicity of this eigenvalue pair is preserved within the spectrum of \(\mathcal{M}_a(q,\eta)\).
 Given the imaginary-axis symmetry of the spectrum of \(\mathcal{M}_a(q,\eta)\), both eigenvalues stay purely imaginary whenever \(q\) 
 lies away from \(q_c\).

For $q=q_c(\eta)$, the unperturbed operator $\mathcal{M}_0(q_c,\eta)$ has two purely imaginary eigenvalues that coalesce
\[
\omega_{-1}(q_c,\eta)=\omega_0(q_c,\eta) := \omega_*(\eta),
\]
with corresponding eigenvectors
\[
\phi_0^0=1, \qquad   \phi_0^{-1}=e^{-iz},
\]
which span the collision subspace $\mathcal{X}_0=\operatorname{span}\{1,e^{-iz}\}$. 
For sufficiently small $a$, this subspace persists as $\mathcal{X}_a(q_c,\eta)$ 
with an analytic basis satisfying $\phi_a^0=1+O(a)$, $\phi_a^{-1}=e^{-iz}+O(a)$.

In the following, we compute the matrix representation of $\mathcal{M}_a(q_c,\eta)$ in the basis $\{\phi_a^0,\phi_a^{-1}\}$.
Direct computation yields that
\[
\begin{aligned}
\mathcal{M}_a(q,\eta) - \mathcal{M}_0(q,\eta)
&= \bigl(1-k^2 (\partial_z + i\eta)^2\bigr)^{-1}
\Bigl[
(c - c_0)((\partial_z + i\eta) - k^2 (\partial_z + i\eta)^3)
- 2(1+b)w w_z \\
&\quad - (1+b)(w^2 - w_0^2)(\partial_z + i\eta)
+ bk^2\bigl(w_z w_{zz} + w w_{zz}(\partial_z + i\eta) + w w_z (\partial_z + i\eta)^2\bigr) \\
&\quad + k^2(w^2 - w_0^2)(\partial_z + i\eta)^3 + 2k^2w w_{zzz}
\Bigr].
\end{aligned}
\]
From Lemma \ref{lem2.1wc}, with  $w=w_0 + a\cos z+O(a^2)$ and $c-c_0=O(a^2)$, we get
\[
\mathcal{M}_a(q,\eta)=\mathcal{M}_0(q,\eta)+a\mathcal{B}+O(a^2),
\]
where
\[
\begin{aligned}
\mathcal{B}
&= w_0 \bigl(1-k^2 (\partial_z + i\eta)^2\bigr)^{-1}
\Bigl[
2(1+b+k^2)\sin z
- 2(1+b)\cos z\,(\partial_z + i\eta)
- b k^2 \cos z\,(\partial_z + i\eta)\\
&~~~ - b k^2 \sin z\,(\partial_z + i\eta)^2
+ 2k^2 \cos z\,(\partial_z + i\eta)^3
\Bigr].
\end{aligned}
\]
Since
\[
(1-k^2(\partial_z + i\eta)^2)^{-1} [2(1+b+k^2)\sin z e^{-iz}]
= -\frac{i(1+b+k^2)}{1+k^2\eta^2} + \frac{i(1+b+k^2)}{1+k^2(\eta-2)^2}e^{-2iz},
\]

\[
(1-k^2(\partial_z + i\eta)^2)^{-1}  [-2(1+b)\cos z\,(\partial_z + i\eta)\,e^{-iz}]
= -\frac{i(1+b)(\eta-1)}{1+k^2\eta^2}  -\frac{i(1+b)(\eta-1)}{1+k^2(\eta-2)^2}e^{-2iz},
\]

\[
(1-k^2(\partial_z + i\eta)^2)^{-1} [-bk^2\cos z\,(\partial_z + i\eta)\,e^{-iz}]
= -\frac{i bk^2(\eta-1)}{2(1+k^2\eta^2)}  -\frac{i bk^2(\eta-1)}{2(1+k^2(\eta-2)^2)}e^{-2iz},
\]

\[
(1-k^2(\partial_z + i\eta)^2)^{-1} [-bk^2\sin z\,(\partial_z + i\eta)^2\,e^{-iz}]
= -\frac{i bk^2(\eta-1)^2}{2(1+k^2\eta^2)} +\frac{i bk^2(\eta-1)^2}{2(1+k^2(\eta-2)^2)}e^{-2iz},
\]

\[
(1-k^2(\partial_z + i\eta)^2)^{-1} [2k^2\cos z\,(\partial_z + i\eta)^3\,e^{-iz}]
 = -\frac{i k^2(\eta-1)^3}{1+k^2\eta^2}  -\frac{i k^2(\eta-1)^3}{1+k^2(\eta-2)^2}e^{-2iz},
\]
 we obtain
\[
\mathcal{B} e^{-iz}
=
w_0\left[
-\frac{i A}{1+k^2\eta^2}
+
\frac{ i \tilde{A}}{1+k^2(\eta-2)^2} e^{-2iz}
\right],
\]
where
\begin{equation*}
A = (1+b)\eta + k^2 + \frac{bk^2}{2}\eta(\eta-1) + k^2(\eta-1)^3,
\end{equation*}
\[
\tilde{A} = 
(1+b+k^2) - (1+b)(\eta-1) - \frac{bk^2}{2}(\eta-1) + \frac{bk^2}{2}(\eta-1)^2 - k^2(\eta-1)^3.
\]
Since the \(e^{-2iz}\) term is orthogonal to the constant function, we have
\[
\langle 1,\mathcal{B}e^{-iz}\rangle = -\frac{i w_0 A}{1+k^2\eta^2}.
\]
Since
\[
\begin{aligned}
& \Bigl[
2(1+b+k^2)\sin z
- 2(1+b)\cos z\,(\partial_z + i\eta)
- b k^2 \cos z\,(\partial_z + i\eta)
- b k^2 \sin z\,(\partial_z + i\eta)^2
+ 2k^2 \cos z\,(\partial_z + i\eta)^3
\Bigr] \cdot 1\\
&= \bigl[2(1+b+k^2) + bk^2\eta^2\bigr]\sin z
- i\eta\bigl[2(1+b)+bk^2+2k^2\eta^2\bigr]\cos z,
\end{aligned}
\]
by using
$\sin z = \frac{e^{iz}-e^{-iz}}{2i}$ and  $\cos z = \frac{e^{iz}+e^{-iz}}{2},$
we have
\[
\begin{aligned}
\mathcal{B}\cdot 1
&=
w_0\Bigg[
-\frac{i}{2} \frac{(1+\eta)\Big( 2(1+b)+2k^2 + k^2\eta(b-2)+2k^2\eta^2\Big) }{1+k^2(1+\eta)^2} e^{iz}\\
&+
\frac{i}{2} \frac{(1-\eta)\Big( 2(1+b)+2k^2 ( 1 + \eta +\eta^2 ) - b k^2\eta\Big) }{1+k^2(1-\eta)^2} e^{-iz}
\Bigg].
\end{aligned}
\]
Thus
\[
\langle e^{-iz}, \mathcal{B}\cdot 1\rangle
=
i \frac{w_0}{1+k^2(1-\eta)^2} B,
\]
where
\begin{equation*}
B =\frac{ (1-\eta)\Big( 2(1+b) + 2k^2(1+\eta+\eta^2) - bk^2\eta\Big )}{2}.
\end{equation*}
Note that if $q^2=q_c^2,$ then
\[
\omega_0(q,\eta)=\omega_{-1}(q,\eta)=\omega_*(\eta),
\]
where
\[
q_c^2
=
\frac{b w_0^2 k^2 \,\eta^2(\eta-1)^2}{(1+k^2)}
\frac{3 + k^2(\eta^2 - \eta + 1)}
{  (1+k^2) - 3k^2 \eta (1- \eta) }.
\]
when we set $\epsilon = q^2 - q_c^2$, then we have
\[
\omega_0(q,\eta) = \omega_*(\eta) + \frac{\epsilon}{\eta(1+k^2\eta^2)}
\]
and
\[
\omega_{-1}(q,\eta) = \omega_*(\eta) + \frac{\epsilon}{(\eta-1)\bigl(1+k^2(\eta-1)^2\bigr)}.
\]
Thus the full matrix $M_a(q,\eta)$ in the chosen basis becomes
\[
\begin{aligned}
M_a(q,\eta)&=
\begin{pmatrix}
i\omega_*+i\dfrac{\epsilon}{\eta(1+k^2\eta^2)}
&
-\frac{i w_0 }{1+k^2\eta^2} A \,a\\[1.2ex]
i \frac{w_0}{1+k^2(1-\eta)^2} B \,a
&
i\omega_*+i\dfrac{\epsilon}{(\eta-1)[1+k^2(\eta-1)^2]}
\end{pmatrix}\\
&+O(a^2+|a\epsilon|).
\end{aligned}
\]
We seek eigenvalues of $M_a(q,\eta)$ in the form
\[
\lambda = i\omega_* + iX,
\]
by substituting \(\lambda\) into \(\det(\lambda I - M_a)=0\) and keeping terms up to \(O(a^2 |\epsilon|+|a|\epsilon^2+|a|^3)\), we obtain
\[
\begin{aligned}
&X^2 - \epsilon\left[
\frac{1}{\eta(1+k^2\eta^2)}
+
\frac{1}{(\eta-1)(1+k^2(\eta-1)^2)}
\right] X\\
&+
\frac{\epsilon^2}{\eta(\eta-1)(1+k^2\eta^2)(1+k^2(\eta-1)^2)}
+
\frac{w_0^2 A B\,a^2}{(1+k^2\eta^2)(1+k^2(\eta-1)^2)}
= 0.
\end{aligned}
\]
 We compute the discriminant  as
\[
\begin{aligned}
\Delta_a(\epsilon,\eta) = \epsilon^2 \left[
\frac{1}{\eta(1+k^2\eta^2)}
-
\frac{1}{(\eta-1)(1+k^2(\eta-1)^2)}
\right]^2  - 
\frac{4 w_0^2 A B\,a^2}{(1+k^2\eta^2)(1+k^2(\eta-1)^2)}.
\end{aligned}
\]

When $AB>0,$ the boundary between stability and instability is given by $\Delta_a(\epsilon,\eta)  = 0$. Solving for $\epsilon$ 
gives the critical detuning
\[
\epsilon_{\text{crit}}(\eta) =
\frac{2|a|\,|w_0|\, \eta(1-\eta)\,
\sqrt{AB\,(1+k^2\eta^2)\bigl(1+k^2(\eta-1)^2\bigr)}}
{1 + k^2(3\eta^2-3\eta+1)}.
\]
For any $a$ sufficiently small,  when \(|\epsilon| > \epsilon_{\text{crit}}(\eta)\), we have \(\Delta_a(\epsilon,\eta) \ge 0\).
   When \(|\epsilon| < \epsilon_{\text{crit}}(\eta) \), we have \(\Delta_a(\epsilon,\eta) < 0\).
This implies that, for \(q_0^2 \le q^2 \le q_c^2 + \epsilon_*\), 
when \(|q^2 - q_c^2| >  \epsilon_{\text{crit}}(\eta) \), the two eigenvalues of \(\mathcal{M}_a(q, \eta)\) are purely imaginary.
When \(|q^2 - q_c^2| <  \epsilon_{\text{crit}}(\eta) \), the two eigenvalues of \(\mathcal{M}_a(q, \eta)\) are
 complex with opposite nonzero real parts.

 When \(AB < 0\), we have \(\Delta_a(\epsilon,\eta) > 0\), and this implies that the two eigenvalues of \(\mathcal{M}_a(q, \eta)\)
  are purely imaginary for \(q_0^2 \le q^2 \le q_c^2 + \epsilon_*\).
\end{proof}

Next, we consider the sign analysis of $AB$.
\begin{lemma}[Unified criterion for the sign of $AB$ and global conditions]
    \label{lem:AB-full-criterion}
    Let $b>0$, $0<\eta\le \tfrac12$, $k>0$. Define
    \[
    f_A(\eta):=\eta^2+\frac{b-6}{2}\eta+\frac{6-b}{2},\qquad 
    g_B(\eta):=b\eta-2(1+\eta+\eta^2),
    \]
    where $A$ and $B$ are given by \eqref{Arep} and \eqref{Brep}, respectively.
    For $f_A(\eta)<0$ and $g_B(\eta)>0$, set
    \[
    \alpha_A(\eta):=-\frac{1+b}{f_A(\eta)},\qquad 
    \alpha_B(\eta):=\frac{2(1+b)}{g_B(\eta)}.
    \]
    We adopt the convention that the corresponding threshold equals $+\infty$ if its definition condition fails.

    \begin{enumerate}
        \item[(1)]     $AB>0$  for fixed $\eta$ if and only if one of the following mutually exclusive cases holds:
        \begin{enumerate}[(i)]
            \item $f_A(\eta)\ge 0$ and $g_B(\eta)\le 0$;
            \item $f_A(\eta)\ge 0$, $g_B(\eta)>0$ and $k^2 < \alpha_B(\eta)$;
            \item $f_A(\eta)<0$, $g_B(\eta)\le 0$ and $k^2 < \alpha_A(\eta)$;
            \item $f_A(\eta)<0$, $g_B(\eta)>0$ and either $k^2 < \min\{\alpha_A,\alpha_B\}$ or $k^2 > \max\{\alpha_A,\alpha_B\}$.
        \end{enumerate}
        
        Meanwhile, $AB<0$ for fixed $\eta$ if and only if one of the following mutually exclusive cases holds:
        \begin{enumerate}[(i)]
            \item $g_B(\eta)>0$ and $\alpha_B(\eta) < k^2 < \alpha_A(\eta)$; 
            \item $f_A(\eta)<0$ and $\alpha_A(\eta) < k^2 < \alpha_B(\eta)$.
        \end{enumerate}

        \item[(2)] \textit{Global uniform positive condition over $\eta\in(0,\tfrac12]: $}
        \[
        AB>0,\;\forall \eta\in(0,\tfrac12]
        \iff
        \begin{cases}
        k>0, & 0<b\le 6,\\[4pt]
        0<k^2 < \dfrac{2(1+b)}{b-6}, & b>6.
        \end{cases}
        \]

        \item[(3)] 
        
        There exist no parameters $b>0,k>0$ such that $AB<0$ holds for all $\eta\in(0,\tfrac12]$.

        \item[(4)] 
        
        There exists at least one $\eta\in(0,\tfrac12]$ satisfying $AB<0$ if and only if
        \[
        b>6 \quad\text{and}\quad k^2 > \frac{2(1+b)}{b-6}.
        \]
    \end{enumerate}

    \end{lemma}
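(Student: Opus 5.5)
The plan is to factor $A$ and $B$ so that each is a positive factor times an expression that is affine in $k^2$. Everything then reduces to elementary sign bookkeeping in $k^2$ for fixed $\eta$, followed by an optimization over $\eta\in(0,\tfrac12]$.

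\emph{Factorization.} Using $1+(\eta-1)^3=\eta(\eta^2-3\eta+3)$, the coefficient of $k^2$ in \eqref{Arep} is
\[
1+\tfrac b2\eta(\eta-1)+(\eta-1)^3=\eta\bigl[\eta^2+\tfrac{b-6}{2}\eta+\tfrac{6-b}{2}\bigr]=\eta f_A(\eta).
\]
This gives
\[
A=\eta\bigl[(1+b)+k^2f_A(\eta)\bigr],\qquad
B=\tfrac{1-\eta}{2}\bigl[2(1+b)-k^2g_B(\eta)\bigr].
\]
The second identity follows directly from \eqref{Brep}. Since $\eta>0$ and $1-\eta>0$, we get $\operatorname{sgn}A=\operatorname{sgn}(1+b+k^2f_A)$ and $\operatorname{sgn}B=\operatorname{sgn}(2(1+b)-k^2g_B)$.

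\emph{Part (1).} Fix $\eta$. If $f_A\ge0$, then $A>0$. If $f_A<0$, then $A>0$ exactly when $k^2<\alpha_A$. Similarly, if $g_B\le0$, then $B>0$. If $g_B>0$, then $B>0$ exactly when $k^2<\alpha_B$. Taking the four combinations of the signs of $f_A$ and $g_B$ and requiring $A$ and $B$ to have equal signs gives cases (i)--(iv). In case (iv), both factors are negative when $k^2>\max\{\alpha_A,\alpha_B\}$. Requiring opposite signs gives the two $AB<0$ cases, with the $+\infty$ convention absorbing the degenerate situations.

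\emph{Parts (2)--(4).} Three facts about the $\eta$-dependence are needed.
\begin{itemize}
\item[(a)] $f_A(\eta)=f_A(0)+\eta\bigl(\eta+\tfrac{b-6}{2}\bigr)$ with $f_A(0)=\tfrac{6-b}{2}$. The vertex value of $f_A$ is $\tfrac{(6-b)(2+b)}{16}$. Hence for $0<b\le6$ we have $f_A\ge0$ on $(0,\tfrac12]$. For $b>6$ we have $f_A(\eta)>f_A(0)$, so $\sup_\eta(-f_A)$ is approached only as $\eta\to0^+$. Consequently $\inf_\eta\alpha_A=\tfrac{2(1+b)}{b-6}$, and this value is not attained.
\item[(b)] $g_B(\eta)>0$ requires $b>2(\eta^{-1}+1+\eta)\ge7$ on $(0,\tfrac12]$. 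So $B>0$ for all $b\le7$. For $b>7$, one checks $g_B\le\tfrac b2-\tfrac72<b-6$, which gives $\alpha_B>\tfrac{2(1+b)}{b-6}$ throughout.
\item[(c)] As $\eta\to0^+$, the bracket in $A$ tends to $1+b$ and the bracket in $B$ tends to $2(1+b)$. Thus $AB>0$ for all sufficiently small $\eta$, whatever $b,k$ are. This gives (3) immediately.
\end{itemize}

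Combining (a) and (b): for $b\le6$ both brackets are positive, and for $b>6$ with $k^2$ below $\tfrac{2(1+b)}{b-6}$ both are positive. This is the ``if'' direction of (2). Conversely, if $b>6$ and $k^2>\tfrac{2(1+b)}{b-6}$, then for $\eta$ near $0$ we have $f_A(\eta)$ close to $f_A(0)$. So $A<0$, while $B>0$ because $g_B<0$ near $0$. This produces an $\eta$ with $AB<0$, proving (4) and the ``only if'' direction of (2).

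The main obstacle is the boundary value $k^2=\tfrac{2(1+b)}{b-6}$. By (a), $f_A(\eta)>f_A(0)$ strictly, so there $A>0$ for every $\eta\in(0,\tfrac12]$, and together with (b) this suggests $AB>0$ for all $\eta$. That would make the correct threshold in (2) and (4) non-strict, in tension with the strict inequality in the statement. I would first double-check (a) and (b) carefully, and then either state the endpoint as ``$\le$'' in (2) and ``$>$'' in (4), or note that the endpoint is excluded only by convention.
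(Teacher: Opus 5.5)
\textbf{The gap is in (c), and with it your proof of (3).} Your factorization and the sign bookkeeping for (1), (2) and (4) follow the paper's route and are correct. Item (c), however, is false. Since $f_A(0)=\tfrac{6-b}{2}$ and $g_B(0)=-2$, the brackets do not tend to $1+b$ and $2(1+b)$ as $\eta\to0^+$. They tend to $(1+b)-\tfrac{b-6}{2}k^2$ and $2(1+b)+2k^2$. The first limit is negative exactly when $b>6$ and $k^2>\tfrac{2(1+b)}{b-6}$. So the claim that $AB>0$ for all small $\eta$ whatever $b,k$ are is wrong. It also contradicts your own argument for (4), which produces $AB<0$ at precisely those small $\eta$. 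Near $\eta=0$ you therefore cannot exclude $AB<0$, and (3) is left unproved.

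\textbf{The fix is to use the other endpoint.} Directly from \eqref{Arep} and \eqref{Brep},
\[
A\big|_{\eta=1/2}=B\big|_{\eta=1/2}=\frac{1+b}{2}+\frac{(7-b)k^2}{8},
\]
so $AB=A^2\ge0$ at $\eta=\tfrac12$ for every $b,k>0$, which gives (3). The paper argues at the same point. Uniform opposite signs would require $A<0<B$ throughout, and at $\eta=\tfrac12$ the thresholds $\alpha_A(\tfrac12)=\alpha_B(\tfrac12)=\tfrac{4(1+b)}{b-7}$ coincide, which gives contradictory bounds on $k^2$.

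\textbf{Your endpoint remark is correct.} At $k^2=\tfrac{2(1+b)}{b-6}$ with $b>6$, the $A$-bracket equals $k^2\eta\bigl(\eta+\tfrac{b-6}{2}\bigr)>0$. Your bound $g_B\le\tfrac{b-7}{2}<b-6$ keeps $B>0$. Hence $AB>0$ for all $\eta\in(0,\tfrac12]$ at that value of $k^2$. The ``only if'' in (2) therefore fails at this single value, and (2) should read $k^2\le\tfrac{2(1+b)}{b-6}$; (4) is correct as stated. The paper proves only the ``if'' direction of (2) and does not address this boundary case.
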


    \begin{proof}
Note that
    \[
    A = \eta \big[(1+b) + k^2 f_A(\eta)\big], \quad
    B = \frac{1-\eta}{2}\big[2(1+b) - k^2 g_B(\eta)\big].
    \]
    Since $\eta>0$ and $1-\eta>0$ on $(0,\tfrac12]$, the sign of $A$ coincides with $(1+b)+k^2f_A(\eta)$, and the sign of $B$ coincides with $2(1+b)-k^2g_B(\eta)$. Combining sign variations of $A$ and $B$ directly yields all pointwise criteria in item (1).

    Next we analyze the two auxiliary quadratics $f_A(\eta)$ and $g_B(\eta)$.
    The quadratic $f_A(\eta)$ is upward-opening with vertex $\eta_0=\frac{6-b}{4}$.  Moreover,
 $f_A(\eta_0) = \frac{(6-b)(b+2)}{16}$, $f_A(0) = \frac{6-b}{2}$ and $f_A(\frac{1}{2}) = \frac{7-b}{4}$.
        When $0<b\le 6$, $f_A(\eta)\ge 0$ for all $\eta\in(0,\tfrac12]$, hence $A>0$ uniformly.    
     When  $6<b<7$, $f_A$ has a single zero $\eta_A\in(0,\tfrac12)$, negative on $(0,\eta_A)$ and positive on $(\eta_A,\tfrac12]$.  
        Besides, $\alpha_A(\eta)$ is strictly increasing on $(0,\eta_A)$ with $\lim_{\eta\to0^+}\alpha_A(\eta)=\frac{2(1+b)}{b-6}$.
     When   \(b\ge 7\),  \(f_A(\eta)<0\) for all \(\eta\in(0,\tfrac12]\), with equality only at \(\eta=\tfrac12\) when \(b=7\). 
    The concave quadratic $g_B(\eta)$ has vertex $\eta_1=\frac{b-2}{4}$. Note that 
     $g_B(\eta_1) = \frac{(b-6)(b+2)}{8}$, \(g_B(0)=-2\) and \(g_B(\frac{1}{2})=\frac{b-7}{2}\).
    For $b\le7$,  $g_B(\eta) \leq 0$ for all  $\eta\in(0,\tfrac12]$, with equality only at \(\eta=\tfrac12\) when \(b=7\),
     it yields $B>0$ for all  $\eta\in(0,\tfrac12]$.
     For $b>7$, $g_B$ admits a unique zero $\eta_B\in(0,\tfrac12)$, negative on $(0,\eta_B)$ and positive on $(\eta_B,\tfrac12]$. 
     Meanwhile, $\alpha_B(\eta)$ is strictly decreasing on $(\eta_B,\tfrac12]$, with $\alpha_B(\tfrac12)=\frac{4(1+b)}{b-7}$.

    We now prove the global criteria.
  
      For $0<b\le6$, $A$ and $B$ are always positive. 
For \(6<b<7\), \(B>0\) holds for all \(\eta\in(0,\tfrac12]\), while \(A>0\) is valid on \(\eta\in(\eta_A,\tfrac12]\).
 If \(k^2< \frac{2(1+b)}{b-6}\), then \(A>0\) holds uniformly for \(\eta\in(0,\tfrac12]\), which further implies \(AB>0\). 
 For \(b\ge 7\), \(A>0\) under the condition \(k^2<\frac{2(1+b)}{b-6}\) and \(B>0\) whenever \(k^2<\frac{4(1+b)}{b-7}\). 
 Since $\frac{2(1+b)}{b-6}<\frac{4(1+b)}{b-7}$ for all \(b\ge 7\), the single constraint \(k^2<\frac{2(1+b)}{b-6}\) 
 simultaneously guarantees both \(A>0\) and \(B>0\).  This establishes item (2).

       Uniform opposite signs require $A<0$ and $B>0$ over the whole interval, which needs contradictory bounds $k^2>\frac{4(1+b)}{b-7}$ and $k^2<\frac{4(1+b)}{b-7}$. Hence such parameters do not exist. This establishes item (3).

     If $b>6$ and $k^2>\frac{2(1+b)}{b-6}$, choose sufficiently  small  $\eta>0$, then $A<0$ and $B>0$, which yields $AB<0$. 
     If $b\le6$, $A>0$ and $B>0$ always hold, $AB<0$ never occurs. This proves item (4).
 
    The proof is complete.
    \end{proof}

\noindent{\bf Acknowledgements}

This work is supported by 
the National Natural Science Foundation of China (No.12571172, No.12471158, No.12475049)
 and by the Natural Science Foundation of Hunan Province (No.2026JJ81121).\\

\end{document}